\pgfplotsset{compat=newest}  
\newtheorem{dfn}{Definition}[section]
\newtheorem{thm}{Theorem}[section]
\newtheorem{lem}{Lemma}[section]
\newtheorem{cor}{Corollary}[section]
\newtheorem{rem}{Remark}[section]
\newtheorem{exam}{Example}[section]
\title{P-order: Unified Convergence Analysis for Nonlinear Iterative Methods}
\author{Xiangmin Jiao\thanks{Department of Applied Mathematics \&
Statistics and Institute for Advanced Computational Science, Stony Brook
University, Stony Brook, NY 11794 (\email{xiangmin.jiao@stonybrook.edu}).}
\and
Hongji Gao\thanks{Department of Applied Mathematics \& Statistics,
Stony Brook University, Stony Brook, NY 11794 (\email{hongji.gao@stonybrook.edu}).}}
\begin{document}
\maketitle

\begin{abstract}
Measuring how quickly iterative methods converge is essential in computational mathematics, but current approaches have significant limitations. Q-order analysis requires strict smoothness conditions, while R-order analysis lacks precision and creates ambiguity, especially when analyzing convergence rates close to linear.

We introduce P-order, a new framework that overcomes these limitations by using a power function $\psi(k)$ combined with asymptotic notation ($\Theta, o, \omega$). Our approach offers two key advantages: it works independently of the chosen norm while providing the precision needed to classify diverse convergence behaviors, including previously hard-to-characterize rates like fractional-power and linearithmic convergence. P-order also systematically accommodates weaker continuity conditions by naturally connecting mathematical assumptions to appropriate Taylor approximation forms.

To enhance practical analysis, we develop two important subclasses, QUP-order and UP-order, which work effectively under different smoothness conditions. We demonstrate P-order's practical value through three applications: (1) refining fixed-point iteration analysis with minimal smoothness requirements (mere differentiability suffices where classical analysis required stronger conditions), (2) identifying previously unreported convergence rates for Newton's method and gradient descent algorithms, and (3) providing a unified analysis of $K$-point methods under $C^{K-1,\nu}$ (i.e., with Hölder continuous $(K-1)$th derivatives), yielding a new characteristic rate $q_{K}(\nu)$.

Our P-order framework provides researchers and practitioners with a sharper, more comprehensive toolbox for convergence analysis, particularly valuable when classical assumptions fail or when analyzing complex convergence behaviors in modern computational applications.
\end{abstract}

\begin{keywords}
Iterative methods, nonlinear equations, optimization, convergence analysis, P-order, H\"older continuity.
\end{keywords}

\begin{AMS}
65H10, 65J15, 65K05, 41A25, 90C30
\end{AMS}

\section{Introduction}
\label{sec:introduction}

Iterative methods are indispensable in scientific computing and machine learning, yet analyzing their convergence rigorously presents challenges. Classical frameworks used for this analysis exhibit profound limitations hindering progress, especially when dealing with the complexities of modern applications. Q-order \cite{traub1964iterative}, while foundational, suffers critically from norm dependence and often requires restrictive smoothness (e.g., $C^q$ continuity \cite{decker1983convergence})---assumptions frequently violated when analyzing methods under weaker conditions like H\"older continuity, common in optimization and machine learning (see, e.g., \cite{berger2020quality,patel2024gradient,ren2010convergence}). R-order \cite{ortega1970iterative}, although norm-independent, sacrifices crucial precision and suffers ambiguities (see \Cref{rem:r-order-ambiguity}). This leaves a pressing need for a unified, norm-independent framework that offers precision across all rates and systematically handles the weaker continuity conditions prevalent today.

To fill this critical gap, we propose P-order (Power-order), designed for precise, norm-independent, and flexible convergence analysis. P-order uses a power function $\psi(k)$ ($\psi(k) \to \infty$) and asymptotic notation ($\Theta, o, \omega$) to characterize rates via $\limsup_{k\to\infty} \|x_k - x_*\|^{1/\psi(k)} = C_\psi \in [0, 1]$. It is norm independent (\Cref{lem:p-norm-independence}) with essential granularity, resolving R-order's ambiguities and allowing sharp distinctions between diverse rates, such as distinguishing linear ($\psi(k)=k$) from linearithmic and anti-linearithmic ($\psi(k)=k\ln k$ and $k/\ln k$, respectively) rates. A key advantage is P-order's systematic handling of weaker continuity by naturally aligning analysis with appropriate Taylor remainder forms. Furthermore, P-order enables enhanced convergence characterizations via QUP-order (Quasi-Uniform, when the limit exists) and UP-order (Uniform, $\Theta$-bounds), aligning the analysis tightly with the problem's known properties. For many analyses, QUP-order offers the most convenient balance of rigor and applicability, and UP-order also offers additional flexibility by replacing $\Theta$ with $\mathcal{O}$ or $\Omega$.

The versatility of the P-order framework enable several novel contributions that advance the convergence theory. First, P-order allows a refined analysis of fixed-point iterations under weakened assumptions; sharp QUP-linear/superlinear rates are established requiring only mere differentiability (\Cref{thm:linear-superlinear}), conditions under which classical Q-order analysis often falters. Moreover, P-order provides the first derivation of explicit conditions for QUP-fractional-power convergence near singularities (\Cref{thm:fp-fractional-power}), characterizing behavior previously unreported. Second, leveraging P-order's precision, we identify and provide the first rigorous characterization of previously unreported convergence rates for Newton's method when applied beyond standard analytic settings \cite{decker1980newton}, including fractional-power, linearithmic, and anti-linearithmic behaviors. We also establish novel fractional-power rates for gradient descent, demonstrating P-order's applicability to core optimization algorithms. Finally, the framework yields a novel, unified analysis of $K$-point methods under relaxed $C^{K-1,\nu}$ H\"older continuity. This analysis culminates in deriving the characteristic UP-order rate $q_K(\nu)$ explicitly as a function of the smoothness exponent (\Cref{thm:Kpoint_holder_rate}), offering unprecedented generality and precision in quantifying the impact of reduced smoothness for general $K$ and $\nu$.

The remainder of the paper is organized as follows. \Cref{sec:background} reviews essential concepts and classical convergence orders. \Cref{sec:pconvergence} introduces the P-order framework. \Cref{sec:fixed-point} presents a refined analysis of general fixed-point iterations, including a sufficient condition for fractional-power convergence. \Cref{sec:examples-sublinear} presents examples for Newton's method and gradient descent with previously unreported convergence rates. \Cref{sec:multipoint} provides a new analysis of interpolation-based multipoint methods under H\"older continuity. \Cref{sec:conclusions} summarizes findings and discusses future work.

\section{Background}
\label{sec:background}

This section reviews essential concepts on vector norms and their equivalences,
fixed-point iterations, the Contraction Mapping Theorem, and the classical convergence
rate concepts of Q-order, R-order, and their variants. Our discussion applies to both the numerical solution of systems of nonlinear equations and multivariate optimization
problems, highlighting the deep connections between these two areas, which often rely on
similar iterative techniques.

\subsection{Vector Norms, Matrix Norms, and Equivalences}
\label{subsec:vector-norms}

In analyzing nonlinear equations and multivariate optimization, vector norms provide essential tools for quantifying errors and convergence rates. The $\ell^p$ norms, defined for $\boldsymbol{x} \in \mathbb{R}^n$ as
\begin{equation}
\|\boldsymbol{x}\|_p = \Biggl(\sum_{i=1}^n |x_i|^p\Biggr)^{1/p}, \quad
1 \le p < \infty,
\label{eq:lp-norm}
\end{equation}
with extension $\|\boldsymbol{x}\|_\infty = \max{1 \le i \le n} |x_i|$, are widely used for their computational efficiency and geometric interpretations. While all $\ell^p$ norms are equivalent in finite-dimensional spaces, the equivalence constants depend on dimension $n$:
\begin{equation}
\alpha(n,p,q)\,\|\boldsymbol{x}\|_p \le \|\boldsymbol{x}\|_q \le
\beta(n,p,q)\,\|\boldsymbol{x}\|_p \quad \text{for all }
\boldsymbol{x} \in \mathbb{R}^n,
\label{eq:norm-equivalence}
\end{equation}
where $\alpha(n,p,q)$ and $\beta(n,p,q)$ are positive constants. This norm dependence is critical when assessing convergence rates in problems
whose dimensions may vary.

Given a matrix $\boldsymbol{A} \in \mathbb{R}^{m \times n}$, the induced matrix norm is defined as
\begin{equation}
\|\boldsymbol{A}\|_p = \sup_{\|\boldsymbol{x}\|_p = 1} \|\boldsymbol{A}\boldsymbol{x}\|_p,
\label{eq:matrix-norm}
\end{equation}
where $\|\boldsymbol{x}\|_p$ is the $\ell^p$ norm of $\boldsymbol{x} \in \mathbb{R}^n$. Similar to vector norms, induced matrix norms are equivalent across different $\ell^p$ norms, where the constants depend on dimensions. While weighted and subordinate norms are sometimes used in convergence analysis, their norm-equivalence constants often depend on matrix numerical values. For clarity, we focus on $\ell^p$ norms.

\subsection{Fixed-Point Iterations and the Contraction Mapping Theorem}
\label{subsec:fixed-point-intro}

Many iterative methods for solving nonlinear equations and optimization problems can be recast as fixed-point iterations. Focusing on the real-valued case (with straightforward extension to complex numbers), such iterations take the form
\[
\boldsymbol{x}_{k+1} = \boldsymbol{g}(\boldsymbol{x}_k),
\]
where \(\boldsymbol{g}\colon \Omega\subseteq\mathbb{R}^n\to\Omega\) is a (possibly nonlinear) mapping. The goal is to find a fixed point \(\boldsymbol{x}_*\) satisfying
\[
\boldsymbol{x}_* = \boldsymbol{g}(\boldsymbol{x}_*).
\]

For instance, consider a system of nonlinear equations
\[
\boldsymbol{f}(\boldsymbol{x}) = \boldsymbol{0}.
\]
Under suitable conditions, this problem can be recast as a fixed-point system by defining
\[
\boldsymbol{x} = \boldsymbol{g}(\boldsymbol{x}) \quad \Longleftrightarrow \quad
\boldsymbol{f}(\boldsymbol{x}) = \boldsymbol{0}.
\]
A common form of these fixed-point iterations is given by
\begin{equation}
\boldsymbol{g}(\boldsymbol{x}) = \boldsymbol{x} - \boldsymbol{A}(\boldsymbol{x})\boldsymbol{f}(\boldsymbol{x}),
\label{eq:fixed-point-equation}
\end{equation}
where $\boldsymbol{A}(\boldsymbol{x})$ typically depends on the inverse of its Jacobian $\boldsymbol{J_f}(\boldsymbol{x})$ (or of its surrogate). Newton's method (also known as the Newton--Raphson method) arises by setting
$\boldsymbol{A}(\boldsymbol{x}) = \boldsymbol{J_f}(\boldsymbol{x})^{-1}$. Similarly, many optimization problems can be recast as fixed-point problems.
For a continuously differentiable function
$f\colon \mathbb{R}^n \to \mathbb{R}$, minimizing $f$ requires
$\boldsymbol{\nabla} f(\boldsymbol{x}_*) = \boldsymbol{0}$.
Expressing this condition as
\begin{equation}
\boldsymbol{x}_* = \boldsymbol{g}(\boldsymbol{x}_*) = \boldsymbol{x}_* - \boldsymbol{A}(\boldsymbol{x}_*)\,\boldsymbol{\nabla} f(\boldsymbol{x}_*),
\label{eq:fixed-point-gradient}
\end{equation}
yields a fixed-point system.
If $\boldsymbol{A}(\boldsymbol{x}) = \alpha\,\boldsymbol{I}$ for $\alpha > 0$, this is the gradient (steepest) descent method.
If $\boldsymbol{A}(\boldsymbol{x}) = \boldsymbol{H}(\boldsymbol{x})^{-1}$, where $\boldsymbol{H}$ is the Hessian of $\boldsymbol{f}$, we recover Newton's method;
if $\boldsymbol{A}(\boldsymbol{x}) = \boldsymbol{B}(\boldsymbol{x})^{-1}$, where $\boldsymbol{B}$ is a surrogate Hessian, we obtain quasi-Newton methods such as BFGS \cite{nocedal2006numerical}.

A key idea in fixed-point convergence analysis is Lipschitz continuity. A function
$\boldsymbol{f}\colon \mathbb{R}^n \to \mathbb{R}^m$ is \emph{Lipschitz continuous} on
$\Omega \subseteq \mathbb{R}^n$ if there is $L \ge 0$ such that
\begin{equation}
\|\boldsymbol{f}(\boldsymbol{x}) - \boldsymbol{f}(\boldsymbol{y})\|
\le L\,\|\boldsymbol{x} - \boldsymbol{y}\|,
\quad \forall\,\boldsymbol{x},\boldsymbol{y} \in \Omega.
\label{eq:Lipschitz}
\end{equation}
The constant $L$, the \emph{Lipschitz constant}, plays a critical role in the convergence analysis of fixed-point iterations. A \emph{contraction mapping} $\boldsymbol{g}$ has Lipschitz constant (under a given norm) $L_{\boldsymbol{g},p} < 1$, so
\begin{equation}
\|\boldsymbol{g}(\boldsymbol{x}) - \boldsymbol{g}(\boldsymbol{y})\|_p
\le L_{\boldsymbol{g},p}\,\|\boldsymbol{x} - \boldsymbol{y}\|_p \quad
\forall\,\boldsymbol{x},\boldsymbol{y}\in \Omega.
\label{eq:contraction}
\end{equation}
We emphasize $p$ to indicate the norm dependence of $L_{\boldsymbol{g},p}$.

The Contraction Mapping Theorem of Banach~\cite{banach1922operations} states
that if $\Omega$ is a closed, complete subset of $\mathbb{R}^n$ and
$\boldsymbol{g}\colon \Omega \to \Omega$ is a contraction mapping, then:
\begin{enumerate}
\item There is a unique fixed point $\boldsymbol{x}_* \in \Omega$.
\item For any $\boldsymbol{x}_0 \in \Omega$, the sequence $\{\boldsymbol{x}_k\}$ defined by $\boldsymbol{x}_{k+1} = \boldsymbol{g}(\boldsymbol{x}_k)$ converges to $\boldsymbol{x}_*$.
\end{enumerate}
Another limitation of Banach's theorem is that it only applies to Lipschitz continuous functions. Many modern applications often involve functions with weaker continuity properties. A function $\boldsymbol{f}\colon \Omega \to \mathbb{R}^m$ is \emph{H\"older continuous} on $\Omega \subseteq \mathbb{R}^n$ with exponent $\nu \in (0, 1]$ and constant $L_\nu > 0$ if
\begin{equation}\label{eq:holder-continuity}
\|\boldsymbol{f}(\boldsymbol{x}) - \boldsymbol{f}(\boldsymbol{y})\| \le L_\nu\,\|\boldsymbol{x} - \boldsymbol{y}\|^\nu
\end{equation}
for all $\boldsymbol{x},\boldsymbol{y}\in\Omega$. We denote by $C^{K,\nu}(\Omega)$ the class of $k$-times continuously differentiable functions whose $k$th derivative, $\boldsymbol{f}^{(k)}$, satisfies this condition. There has been increased interest in analyzing iterative methods for functions with H\"older-continuous derivatives in the context of both solving nonlinear equations~\cite{hernandez2001secant,ren2010convergence} and numerical optimization~\cite{berger2020quality,yashtini2016global}.

More generally, the smoothness of a function can be characterized using moduli of continuity and smoothness~\cite{devore1993constructive}. The \emph{(first) modulus of continuity} of a function $f$ on an interval $I$ is defined as
\begin{equation}
\omega_1(f, t) = \sup_{x,y\in I, |x-y|\le t} |f(x) - f(y)|, \quad t \ge 0.
\label{eq:modulus-continuity}
\end{equation}
A function $f$ is H\"older continuous with exponent $\nu$ if and only if $\omega_1(f, t) = \mathcal{O}(t^\nu)$ as $t\to 0^+$. For $k\ge 1$, the \emph{$k$th modulus of smoothness} is defined using the $k$th forward difference $\Delta_h^k f(x) = \sum_{j=0}^k (-1)^{k-j} \binom{k}{j} f(x+jh)$ as
\begin{equation}
\omega_k(f, t) = \sup_{x, x+kh \in I, |h|\le t} |\Delta_h^k f(x)|, \quad t \ge 0.
\label{eq:modulus-smoothness}
\end{equation}
If $f \in C^{k-1}(I)$, the $k$th modulus of smoothness is related to the modulus of continuity of its $(k-1)$th derivative by the inequality $\omega_k(f, t) \le C t^{k-1} \omega_1(f^{(k-1)}, t)$ for some constant $C$. This relationship is crucial for connecting derivative properties (like H\"older continuity) to the behavior of divided differences, which underpin the analysis of multipoint methods (see \Cref{sec:multipoint}).

\subsection{Q-Order and R-Order}
\label{subsec:q-and-r}

Consider a sequence $\{\boldsymbol{x}_k\}$ converging to $\boldsymbol{x}_*$ in $\mathbb{R}^n$ (or $\mathbb{C}^n$). Two classic frameworks to quantify its convergence rate are Q-order and R-order.

\begin{dfn}[Q-Order]
\label{def:q-order-traub}
A sequence $\{\boldsymbol{x}_k\} \subset \mathbb{R}^n$ converging to $\boldsymbol{x}_*$ has Q-order $q\ge 1$ if there exists $Q_q>0$ (and $0<Q_1<1$ if $q=1$) such that
\begin{equation}
\lim_{k \to \infty} \frac{\|\boldsymbol{x}_{k+1} - \boldsymbol{x}_*\|}{
\|\boldsymbol{x}_k - \boldsymbol{x}_*\|^q} = Q_q.
\label{eq:q-order-def}
\end{equation}
If $q=1$ and $0 < Q_1 < 1$, the convergence is Q-linear; if $q>1$, it is Q-superlinear; if $q=1$ and $Q_1=1$, it is Q-sublinear.
\end{dfn}

This classical definition, originated by Traub~\cite{traub1964iterative} and formalized by Ortega and Rheinboldt~\cite{ortega1970iterative}, is the most widely used in numerical analysis. It is norm-dependent in the sense that the limit may not exist in some (or all) the norms, and it requires asymptotic uniformity of the error.

\begin{rem}[Variants of Q-Order]
\label{rem:eq-order}
Traub developed Q-order by extending the integer-order linear and superlinear convergence of Schr\"oder~\cite{traub1964iterative}. Before Traub's work, Kantorovich~\cite{kantorovich1948functional} employed a specific form to establish Newton's quadratic convergence. 
To relax the asymptotic uniformity requirement in Q-order, Potra and Pták~\cite{potra1984nondiscrete,potra1989q} introduced \emph{``exact'' Q-order} (abbreviated as \emph{EQ-order} hereafter), which holds if there exist $\alpha_q,\beta_q>0$ such that
\begin{equation}
\alpha_q\,\|\boldsymbol{x}_k - \boldsymbol{x}_*\|^q
\;\le\;
\|\boldsymbol{x}_{k+1} - \boldsymbol{x}_*\|
\;\le\;
\beta_q\,\|\boldsymbol{x}_k - \boldsymbol{x}_*\|^q,
\quad k=0,1,\ldots.
\label{eq:exact-q-order}
\end{equation}
This definition omits explicit upper bounds on $\alpha_q$ and $\beta_q$ in the EQ-linear case and keeps them norm-dependent, limiting its usefulness in practice. Nevertheless, we will show that EQ-order is a special case of our proposed P-order framework.
\end{rem}

To address the limitations of Q-order, Ortega and Rheinboldt~\cite{ortega1970iterative} introduced \emph{R-order}. They provided multiple definitions addressing various convergence aspects (e.g., local vs. global, iterative processes vs. single sequences). Subsequent authors (e.g., \cite{dennis1983numerical,nocedal2006numerical,sun2006optimization}) often merged or adapted these notions---sometimes dropping technical details---leading to different ``R-order'' interpretations. To clarify, we adopt Ortega and Rheinboldt's original formulation, but combine their three definitions (~\cite[Section 9.2]{ortega1970iterative}) into a single statement covering all nuances:

\begin{dfn}[R-Order (Ortega and Rheinboldt~\cite{ortega1970iterative})]
\label{def:r-order-OR}
Let $\{\boldsymbol{x}_k\} \subset \mathbb{R}^n$ converge to $\boldsymbol{x}_*$. The \emph{rate convergence factor} or \emph{R-factor} is
\begin{equation}
R_r\{\boldsymbol{x}_k\} =
\begin{cases}
\displaystyle \limsup_{k\to\infty} \|\boldsymbol{x}_k - \boldsymbol{x}_*\|^{1/k}, & r = 1, \\[1mm]
\displaystyle \limsup_{k\to\infty} \|\boldsymbol{x}_k - \boldsymbol{x}_*\|^{1/r^k}, & r > 1.
\end{cases}
\end{equation}
For an iterative process $\mathscr{I}$ with limit $\boldsymbol{x}_*$,
\begin{equation}
R_r\{\mathscr{I},\boldsymbol{x}_*\}
= \sup_{\{\boldsymbol{x}_k\} \in \mathscr{I}} R_r\{\boldsymbol{x}_k\},
\end{equation}
where the supremum is over all convergent sequences in $\mathscr{I}$. The process has \emph{R-order} $O_R$ if
\begin{equation}\label{eq:r-order-OR}
O_R(\mathscr{I}, \boldsymbol{x}_*) =
\begin{cases}
\infty, & \text{if } R_r = 0, \,r\in[1,\infty), \\
\inf_{r\in[1,\infty)}{\{r \mid R_r(\mathscr{I}, \boldsymbol{x}_*) = 1\}}, & \text{otherwise}.
\end{cases}
\end{equation}
If $R_1\in (0,1)$, convergence is \emph{R-linear}; if $R_1=1$ or $0$, it is \emph{R-sublinear} or \emph{R-superlinear}, respectively. Similarly, $R_2\in(0,1)$, $R_2=1$, and $R_2=0$ imply \emph{R-quadratic}, \emph{R-subquadratic}, or \emph{R-superquadratic}, respectively.
\end{dfn}

\Cref{def:r-order-OR} is subtle. It supports non-monotonic sequences and achieves norm independence using the lim-sup of the error norm's root. The distinction between $R_r\{\boldsymbol{x}_k\}$ and $R_r\{\mathscr{I},\boldsymbol{x}_*\}$ concerns single sequences vs. sets of sequences, which we omit (as others did) since a single sequence can serve as the entire set.

\begin{rem}[Differences Between R-order-1 and R-linear]
\label{rem:r-order-ambiguity}
In~\Cref{def:r-order-OR}, R-order-1 is a superset of R-linear. An R-order-1 sequence may still converge R-superlinearly or R-sublinearly, whereas R-linear is mutually exclusive with these two by definition. For example, consider $\|\boldsymbol{x}_k - \boldsymbol{x}_*\| = 0.5^{k\ln k}$, which has $R_r=1$ for all $r>1$ but $R_1=0$. By \Cref{def:r-order-OR}, this sequence has R-order-1 yet is R-superlinear. Similarly, $\|\boldsymbol{x}_k - \boldsymbol{x}_*\| = 0.5^{k/\ln k}$ exhibits R-order-1 and R-sublinear convergence. Analogous behavior occurs for $r>1$, e.g., R-order-2 vs.\ R-quadratic.
\end{rem}

The nuance highlighted in \Cref{rem:r-order-ambiguity} was not stated explicitly by Ortega and Rheinboldt~\cite{ortega1970iterative}, leading to different re-interpretations by Dennis and Schnabel~\cite{dennis1983numerical} (``R-order'') and Nocedal and Wright~\cite{nocedal2006numerical} (``R-linear''). 
The main confusion is that the infimum in \eqref{eq:r-order-OR} does not yield a precise rate, which our P-order framework resolves. Under the original R-order definition from \Cref{def:r-order-OR}, Ortega and Rheinboldt~\cite[Section 3.2]{ortega1972numerical} showed R-order is always at least the Q-order and $R_1$ is at most $Q_1$. Our P-order further refines these relationships for linear-convergent cases, avoiding R-order's ambiguities and enabling more refined characterizations of convergence rates.

\subsection{Asymptotic Notation in Algorithm Analysis}
\label{subsec:asymptotic-notation}

To develop a unified framework for convergence analysis, we draw inspiration from
asymptotic notations used in algorithm analysis. These notations provide a concise
way to describe the growth rates of functions, abstracting away constant factors
and lower-order terms. For functions $f(n)$ and $g(n)$ defined on the positive integers
(or more generally, any unbounded set), we primarily use:

\begin{itemize}
    \item \textbf{Big-Theta:} $f(n) = \Theta(g(n))$ if $f(n)=\mathcal{O}(g(n))$ and $f(n)=\Omega(g(n))$,
    where $f(n) = \mathcal{O}(g(n))$ if there exist constants $c>0$ and $n_0$ such that $f(n) \le c\,g(n)$ for all $n \ge n_0$, and $f(n) = \Omega(g(n))$ if $g(n)=\mathcal{O}(f(n))$.
    \item \textbf{Little-o:} $f(n) = o(g(n))$ if $\lim_{n\to\infty} f(n)/g(n) = 0$.
    \item \textbf{Little-omega:} $f(n) = \omega(g(n))$ if $\lim_{n\to\infty} f(n)/g(n) = \infty$.
\end{itemize}

For instance, $3n^2 + 5n + 2 = \Theta(n^2)$, and $n \ln n = o(n^2)$.
In algorithm analysis, these notions are used to describe the running time or space
complexity of an algorithm as the input size grows; see, e.g., \cite{cormen2009introduction}
and \cite{knuth1997art}. We will use similar ideas in this work to characterize convergence
rates as the iteration count grows.

\section{P-Order: A Unified Framework}
\label{sec:pconvergence}

This section introduces P-order, a unified framework for quantifying the convergence rates of iterative methods, establishing its properties, defining key subclasses, and establishing their relationships with Q-order and R-order.

\subsection{General Framework}

We first define P-order, which is inspired by and generalizes the R-order framework.

\begin{dfn}[P-Order]
\label{def:p-order-general}
Let $\{\boldsymbol{x}_k\}_{k\ge 0} \subset \mathbb{R}^n$ converge to $\boldsymbol{x}_*$. Let $\psi\colon \mathbb{N}\to (0,\infty)$ satisfies $\lim_{k\to\infty}\psi(k)=\infty$. The sequence converges with \emph{P-order} characterized by the power function $\psi(k)$ if
\begin{equation}\label{eq:p-conv-def}
\limsup_{k\to\infty}\|\boldsymbol{x}_k-\boldsymbol{x}_*\|^{1/\psi(k)} = C_\psi,
\end{equation}
where the constant $C_\psi \in (0, 1)$ is the \emph{P-base}. If $C_\psi=0$, the rate is \emph{P-super-$\psi$}. If $\limsup_{k\to\infty}\|\boldsymbol{x}_k-\boldsymbol{x}_*\|^{1/\psi(k)} = 1$, the rate is \emph{P-sub-$\psi$}. If $C_\psi=0$ for all $\psi(k) \to \infty$, the convergence is \emph{P-order-$\infty$} (finite termination).
\end{dfn}

In \Cref{def:p-order-general}, convergence is asymptotically faster for larger power functions $\psi(k)$; for a given $\psi(k)$, convergence is faster for smaller P-bases $C_\psi \in (0,1)$. The P-order-$\infty$ case represents convergence faster than any finite P-order rate and corresponds to finite termination (i.e., $\|\boldsymbol{x}_k-\boldsymbol{x}_*\|=0$ for some finite $k$). This concept enhances the completeness of subsequent results (e.g., \Cref{thm:linear-superlinear}). An important property of P-order is its norm-independence, established next.

\begin{lem}[Norm Independence]
\label{lem:p-norm-independence}
If a sequence $\{\boldsymbol{x}_k\}$ converges with P-order $\psi(k)$ (which tends to $\infty$) and P-base $C_\psi$ in some norm $\|\cdot\|_p$, it converges with the same P-order $\psi(k)$ and P-base $C_\psi$ in any equivalent norm $\|\cdot\|_q$.
\end{lem}
\begin{proof}
Let $\xi_{k,p} = \|\boldsymbol{x}_k-\boldsymbol{x}_*\|_p$. Due to the norm equivalence \eqref{eq:norm-equivalence}, we have $$\alpha^{1/\psi(k)} (\xi_{k,p})^{1/\psi(k)} \le (\xi_{k,q})^{1/\psi(k)} \le \beta^{1/\psi(k)} (\xi_{k,p})^{1/\psi(k)}.$$ Since $\alpha^{1/\psi(k)} \to 1$ and $\beta^{1/\psi(k)} \to 1$ as $k\to\infty$ (because $\psi(k)\to\infty$), taking the $\limsup$ yields $\limsup_{k\to\infty}(\xi_{k,q})^{1/\psi(k)} = \limsup_{k\to\infty}(\xi_{k,p})^{1/\psi(k)} = C_\psi$. Hence, both $\psi(k)$ and $C_\psi$ are independent of the norm used.
\end{proof}

For the special case of $\psi(k)=\Theta(k)$, P-order is classified as \emph{P-sublinear} or \emph{P-superlinear} if $C_\psi=1$ and $C_\psi=0$, respectively. We can also classify P-sublinear and P-superlinear as follows:
\begin{itemize}
    \item \textbf{P-sublinear}: $\psi(k) = o(k)$.
    \item \textbf{P-linear}: $\psi(k) = k$. Here, $C_\psi \in (0,1)$ is the \emph{asymptotic error constant}.
    \item \textbf{P-superlinear}: $\psi(k) = \omega(k)$.
\end{itemize}

While analogous to those of R-order, the definitions of P-sublinear based on either $C_\psi=1$ (with $\psi(k)=k$) or $\psi(k)=o(k)$, and similarly for P-superlinear based on $C_\psi=0$ (with $\psi(k)=k$) or $\psi(k)=\omega(k)$, are fully consistent, avoiding ambiguities noted for R-order (cf. \Cref{rem:r-order-ambiguity}), as established by the following lemma.

\begin{lem}[P-Sub-$\psi$ and P-Super-$\psi$ Convergence]
\label{lem:p-sub-super-psi}
Let $\{\boldsymbol{x}_k\}$ converge with P-order characterized by $\psi(k)$ and P-base $C_\psi \in (0,1)$. For another power function $\varphi(k)$ satisfying $\varphi(k)\to\infty$: $\psi(k) = o(\varphi(k))$ iff $\limsup_{k\to\infty}\|\boldsymbol{x}_k-\boldsymbol{x}_*\|^{1/\varphi(k)} = 1$, and $\psi(k) = \omega(\varphi(k))$ iff $\limsup_{k\to\infty}\|\boldsymbol{x}_k-\boldsymbol{x}_*\|^{1/\varphi(k)} = 0$.
\end{lem}
\begin{proof}
Let $L = \limsup_{k\to\infty}\|\boldsymbol{x}_k-\boldsymbol{x}_*\|^{1/\varphi(k)}$, which equals $\limsup_{k\to\infty} C_\psi^{\psi(k)/\varphi(k)}$.

(i) For $\psi(k) = o(\varphi(k))$: Since $\lim_{k\to\infty} \psi(k)/\varphi(k) = 0$ and $0 < C_\psi < 1$, we have $L = \limsup_{k\to\infty} C_\psi^{\psi(k)/\varphi(k)} = C_\psi^0 = 1$. Conversely, if $L = 1$, then $\psi(k)/\varphi(k)$ must approach 0 as $k\to\infty$, since any non-zero limit or unbounded growth would yield $L < 1$. Hence $\psi(k) = o(\varphi(k))$.

(ii) For $\psi(k) = \omega(\varphi(k))$: Since $\lim_{k\to\infty} \psi(k)/\varphi(k) = \infty$ and $0 < C_\psi < 1$, we have $L = \limsup_{k\to\infty} C_\psi^{\psi(k)/\varphi(k)} = 0$. Conversely, if $L = 0$, then $\psi(k)/\varphi(k)$ must tend to $\infty$ as $k\to\infty$ since $C_\psi^r \to 0$ only when $r \to \infty$, which implies $\psi(k) = \omega(\varphi(k))$.
\end{proof}

Setting $\varphi(k)=k$ in \Cref{lem:p-sub-super-psi} confirms the consistency noted above between the different ways of classifying P-order convergence. This internal consistency, combined with the use of the power function $\psi(k)$ in a manner analogous to little-$o$, big-$\Theta$, and little-$\omega$ notation from algorithm analysis, makes P-order a much more refined framework than R-order. In particular, P-order can distinguish different sublinear and superlinear rates, including potentially exotic rates such as linearithmic or anti-linearithmic convergence (as we will discuss in \Cref{subsec:common-p-rates}). Furthermore, the constant factor within $\psi(k)$ (e.g., when $\psi(k) = c\,k^r$) can further refine the classification for detailed comparative studies if needed.

\subsection{Quasi-Uniform P-Order}

The general P-order definition uses $\limsup$, which accommodates non-uniform convergence but can complicate analysis. When the limit exists, we have the simpler Quasi-Uniform P-order (QUP-order).

\begin{dfn}[Quasi-Uniform P-Order (QUP-Order)]
\label{def:p-quasi-uniform-order}
A sequence $\{\boldsymbol{x}_k\} \subset \mathbb{R}^n$ converging to $\boldsymbol{x}_*$ has \emph{QUP-order} with power function $\psi(k)$ (where $\psi(k)\to\infty$) and P-base $C_\psi\in (0,1)$ if the limit exists, i.e.,
\begin{equation}\label{eq:p-quasi-uniform-conv-def}
\lim_{k\to\infty}\|\boldsymbol{x}_k-\boldsymbol{x}_*\|^{1/\psi(k)} = C_\psi.
\end{equation}
An equivalent form of QUP-order is that for any $\epsilon > 0$,
\begin{equation}\label{eq:p-quasi-uniform-conv-alt}
\omega\Bigl((C_\psi-\epsilon)^{\psi(k)}\Bigr) =
    \|\boldsymbol{x}_k-\boldsymbol{x}_*\| = o\Bigl((C_\psi+\epsilon)^{\psi(k)}\Bigr).
\end{equation}
\end{dfn}

The limit form \eqref{eq:p-quasi-uniform-conv-def} and the asymptotic-bound form \eqref{eq:p-quasi-uniform-conv-alt} of QUP-order are equivalent, as shown below.

\begin{lem}[Equivalence of QUP-Order Forms]
\label{lem:qup-order}
\Cref{eq:p-quasi-uniform-conv-def} holds iff \eqref{eq:p-quasi-uniform-conv-alt} holds.
\end{lem}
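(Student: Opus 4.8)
The plan is to strip away the one-sided ``$\le$''/``$\ge$'' packaging in \eqref{eq:p-quasi-uniform-conv-alt}, then reduce both statements to a single claim about a real sequence after taking logarithms. Write $\xi_k=\|\boldsymbol{x}_k-\boldsymbol{x}_*\|$. Since $\xi_k\ge 0$ and $(C_\psi+\epsilon)^{\psi(k)}>0$, the bound $\xi_k\le o\bigl((C_\psi+\epsilon)^{\psi(k)}\bigr)$ — i.e.\ $\xi_k\le h_k$ for some $h_k$ with $h_k/(C_\psi+\epsilon)^{\psi(k)}\to 0$ — is equivalent to $\lim_k \xi_k/(C_\psi+\epsilon)^{\psi(k)}=0$, and likewise $\xi_k\ge \omega\bigl((C_\psi-\epsilon)^{\psi(k)}\bigr)$ is equivalent to $\lim_k \xi_k/(C_\psi-\epsilon)^{\psi(k)}=\infty$ (read for $0<\epsilon<C_\psi$, so the base is positive; for $\epsilon\ge C_\psi$ the lower bound is understood to be vacuous, the base being nonpositive). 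Both conditions in \eqref{eq:p-quasi-uniform-conv-alt} force $\xi_k>0$ for all large $k$, and so does \eqref{eq:p-quasi-uniform-conv-def} because $C_\psi>0$; hence we may assume $\xi_k>0$ throughout and set $a_k=\log\xi_k\in\mathbb{R}$. Taking logarithms, \eqref{eq:p-quasi-uniform-conv-def} becomes $a_k/\psi(k)\to\log C_\psi$, while the $o$- and $\omega$-bounds become $a_k-\psi(k)\log(C_\psi+\epsilon)\to-\infty$ and $a_k-\psi(k)\log(C_\psi-\epsilon)\to+\infty$, respectively.

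Because $\psi(k)\to\infty$, the identity $a_k-\psi(k)\log(C_\psi\pm\epsilon)=\psi(k)\bigl(a_k/\psi(k)-\log(C_\psi\pm\epsilon)\bigr)$ makes both directions routine. For the forward direction, assume $a_k/\psi(k)\to\log C_\psi$ and fix $\epsilon>0$; then $a_k/\psi(k)-\log(C_\psi+\epsilon)\to\log C_\psi-\log(C_\psi+\epsilon)<0$, so multiplying by $\psi(k)\to\infty$ gives the $o$-bound, and for $0<\epsilon<C_\psi$ the same computation with reversed sign gives the $\omega$-bound. For the converse, assume \eqref{eq:p-quasi-uniform-conv-alt} holds for every admissible $\epsilon$. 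From the $o$-bound, $a_k-\psi(k)\log(C_\psi+\epsilon)<0$ eventually, hence $\limsup_k a_k/\psi(k)\le\log(C_\psi+\epsilon)$; letting $\epsilon\downarrow 0$ yields $\limsup_k a_k/\psi(k)\le\log C_\psi$. Symmetrically the $\omega$-bound gives $\liminf_k a_k/\psi(k)\ge\log(C_\psi-\epsilon)$ for all $\epsilon\in(0,C_\psi)$, hence $\liminf_k a_k/\psi(k)\ge\log C_\psi$. Thus $\lim_k a_k/\psi(k)=\log C_\psi$, and exponentiating recovers \eqref{eq:p-quasi-uniform-conv-def}.

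I do not anticipate a genuine obstacle: the two points needing a little care are purely bookkeeping, namely (i) checking that the one-sided ``$\le o$''/``$\ge\omega$'' wording coincides with the two-sided $o$/$\omega$ relations for the nonnegative sequence $\xi_k$ (which also justifies the footnoted remark that ``$\le$'' may be replaced by ``$=$''), and (ii) adopting the convention for $\epsilon\ge C_\psi$ where $(C_\psi-\epsilon)^{\psi(k)}$ is not a positive real. The mathematical core is simply that dividing by $\psi(k)\to\infty$ converts the ``$\to\pm\infty$'' statements about $a_k-\psi(k)\log(\cdot)$ into ``eventually $<$/$>$'' statements about $a_k/\psi(k)$, combined with a squeeze on $\limsup$ and $\liminf$ as $\epsilon\downarrow 0$.
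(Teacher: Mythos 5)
Your proof is correct and takes essentially the same route as the paper's: $\epsilon$-bands around $C_\psi$, the fact that $\psi(k)\to\infty$, and a squeeze as $\epsilon\downarrow 0$; passing to $a_k=\log\|\boldsymbol{x}_k-\boldsymbol{x}_*\|$ is only a cosmetic reparametrization of that argument. If anything, your log-space phrasing makes the little-$o$/little-$\omega$ step and the bookkeeping (the one-sided ``$\le$''/``$\ge$'' wording, the case $\epsilon\ge C_\psi$, eventual positivity of the errors) slightly cleaner than the paper's wording, but the substance is identical.
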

\begin{proof}
Let $\xi_k = \|\boldsymbol{x}_k-\boldsymbol{x}_*\|$.

\textbf{(i) $\eqref{eq:p-quasi-uniform-conv-def} \implies \eqref{eq:p-quasi-uniform-conv-alt}$:} Assume $\lim_{k\to\infty} \xi_k^{1/\psi(k)} = C_\psi$. For any $\epsilon > 0$, consider the limit of the $k$th root of $\xi_k / (C_\psi+\epsilon)^{\psi(k)}$,
\[ \lim_{k\to\infty} \left( \frac{\xi_k}{(C_\psi+\epsilon)^{\psi(k)}} \right)^{1/\psi(k)} = \lim_{k\to\infty} \frac{\xi_k^{1/\psi(k)}}{C_\psi+\epsilon} = \frac{C_\psi}{C_\psi+\epsilon} < 1. \]
By the root test for sequences, this implies $\lim_{k\to\infty} \xi_k / (C_\psi+\epsilon)^{\psi(k)} = 0$, meaning $\xi_k = o((C_\psi+\epsilon)^{\psi(k)})$.
Similarly,
\[ \lim_{k\to\infty} \left( \frac{\xi_k}{(C_\psi-\epsilon)^{\psi(k)}} \right)^{1/\psi(k)} = \frac{C_\psi}{C_\psi-\epsilon} > 1. \]
The root test implies $\lim_{k\to\infty} \xi_k / (C_\psi-\epsilon)^{\psi(k)} = \infty$, meaning $\xi_k = \omega((C_\psi-\epsilon)^{\psi(k)})$. These $o/\omega$ conditions correspond precisely to \eqref{eq:p-quasi-uniform-conv-alt}.

\textbf{(ii) $\eqref{eq:p-quasi-uniform-conv-alt} \implies \eqref{eq:p-quasi-uniform-conv-def}$:}
Assume that for every $\epsilon>0$ we have
\[ \omega\Bigl((C_\psi-\epsilon)^{\psi(k)}\Bigr) \le \xi_k \le o\Bigl((C_\psi+\epsilon)^{\psi(k)}\Bigr). \]
By the definitions of $\mathcal{O}(\cdot)$ and $\omega(\cdot)$, this means that for any $\epsilon>0$, there exists an index $K_\epsilon$ such that for all $k > K_\epsilon$,
$(C_\psi-\epsilon)^{\psi(k)} < \xi_k < (C_\psi+\epsilon)^{\psi(k)}$.
Taking the $1/\psi(k)$ root gives
\[ C_\psi-\epsilon < \xi_k^{1/\psi(k)} < C_\psi+\epsilon \quad \text{for all } k > K_\epsilon. \]
Since this holds for any $\epsilon>0$, by definition of the limit, $\lim_{k\to\infty} \xi_k^{1/\psi(k)} = C_\psi$.
\end{proof}

QUP-order ($\lim$ exists) strengthens P-order ($\limsup$). Uniform P-order (UP-order) is stronger still, requiring $\|\boldsymbol{x}_k-\boldsymbol{x}_*\| = \Theta(C_\psi^{\psi(k)})$:

\begin{dfn}[Uniform P-Order (UP-Order)]
\label{def:up-order}
A sequence $\{\boldsymbol{x}_k\}$ has \emph{UP-order} characterized by $\psi(k)$ (where $\psi(k)\to\infty$) with P-base $C_\psi\in (0,1)$ if
\begin{equation}\label{eq:p-uniform-conv-def}
\|\boldsymbol{x}_k-\boldsymbol{x}_*\| = \Theta\Bigl(C_\psi^{\psi(k)}\Bigr).
\end{equation}
\end{dfn}

The $\Theta(\cdot)$ form \eqref{eq:p-uniform-conv-def} for UP-order and the equivalent small-$o/\omega$ form \eqref{eq:p-quasi-uniform-conv-alt} for QUP-order, both featuring $C_\psi^{\psi(k)}$, motivate the name ``Power-order.'' Clearly, UP-order implies QUP-order. While UP-order has closer ties to Q-order (see \Cref{sec:p-q-relationships}), QUP-order is often more practical for analysis. Thus, QUP-order will be the primary focus henceforth.

\begin{rem}[Relaxation of UP-order]
\label{rem:relaxed-up-order}
By definition UP-order is
\[
\Omega((C_\psi)^{\psi(k)}) = \|\boldsymbol{x}_k-\boldsymbol{x}_*\| = \mathcal{O}((C_\psi)^{\psi(k)}).\] Omitting the left and right hand sides would mean \emph{at least} and \emph{at most} UP-order (or P-order) with power function $\psi(k)$, correspondingly. However, unlike QUP-order, UP-order \textbf{cannot} be written in a small-$o/\omega$ form.
\end{rem}

\subsection{Exponential, Polynomial, and Logarithmic Convergence}
\label{subsec:common-p-rates}

We now consider common power functions $\psi(k)$ (where $\psi(k)\to\infty$), analogous to those in algorithm analysis. Their corresponding P-order refines the granularity of Q-order and R-order categorizations. While the P-order framework is general, the following list illustrates key cases using UP-order notation $\|\boldsymbol{x}_k-\boldsymbol{x}_*\| = \Theta(C^{\psi(k)})$ for brevity; these $\psi$ functions apply equally to QUP-order and P-order. In these examples, the P-base $C$ (omitting the subscript $\psi$ for brevity) satisfies $C \in (0,1)$.

\medskip
\noindent
\textbf{Superlinear Exponential P-Order of order $r > 1$}: Set $\psi(k) = r^k$. For UP-order,
\begin{equation}\label{eq:superlinear-poly}
\|\boldsymbol{x}_k-\boldsymbol{x}_*\| = \Theta\Bigl(C^{\displaystyle r^k}\Bigr).
\end{equation}
Consistent with R-superlinear rates, we term this \emph{P-order-$r$} ($O_P=r$), consistent with Q-order for $r>1$ (e.g., P-quadratic for $r=2$; cf. \Cref{thm:q-superlinar-implies-up}). Note $\psi(k)=r^k$ is invalid if $r\leq 1$ as it does not tend to $\infty$.

\medskip
\noindent
\textbf{Linear and Sublinear Fractional-Power P-Order of order $r\leq 1$}: Set $\psi(k)=k^r$ for $r\in(0,1]$. This includes P-linear ($\psi=k$) and P-fractional-power ($0<r<1$). The unified UP-order form is
\begin{equation}\label{eq:frac-power-unified}
\|\boldsymbol{x}_k-\boldsymbol{x}_*\| = \Theta\Bigl(C^{\displaystyle k^r}\Bigr).
\end{equation}
Sublinear fractional-power rates ($0<r<1$) are termed \emph{fractional-power}, based on algorithm analysis terminology. (Note: superlinear polynomial rates $\psi=k^r, r>1$, are slower than exponential $\psi=r^k$ and less common). For consistency with Q-order, P-order-$r$ uses $\psi=\Theta(r^k)$ if $r>1$, but $\psi=\Theta(k^r)$ if $r\leq 1$. Fractional-power rates are asymptotically slower than polynomial but faster than logarithmic (or polylogarithmic) rates.

\medskip
\noindent
\textbf{Logarithmic P-Order}: Set $\psi(k) = \ln k$. For UP-order,
\begin{equation}\label{eq:log-p}
\|\boldsymbol{x}_k-\boldsymbol{x}_*\| = \Theta\Bigl(C^{\ln k}\Bigr) = \Theta\Bigl(k^{\ln C}\Bigr),
\end{equation}
where $\ln C < 0$ since $C<1$ (as $C^{\ln k} = (e^{\ln k})^{\ln C} = k^{\ln C}$).

\medskip
\noindent
\textbf{Linearithmic and Anti-Linearithmic\footnote{There does not appear to be an established term for the $k/\ln k$ rates. We refer to it as ``anti-linearithmic'' to signify that it is the sublinear counterpart of linearithmic $k\ln k$ rates.} P-Order}: Set $\psi(k) = k\ln k$ (linearithmic) and $\psi(k) = k/\ln k$ (anti-linearithmic). For UP-order, they are
\begin{equation}\label{eq:linearithmic-p}
\|\boldsymbol{x}_k-\boldsymbol{x}_*\| = \Theta\Bigl(C^{\displaystyle k\ln k}\Bigr) \quad \text{and} \quad \|\boldsymbol{x}_k-\boldsymbol{x}_*\| = \Theta\Bigl(C^{\displaystyle k/\ln k}\Bigr)
\end{equation}
respectively. For example, linearithmic rates ($\Theta(1/k!)$ error implies this) are P-superlinear but slower than any P-order-$(1+\epsilon)$. Anti-linearithmic is P-sublinear but faster than any P-order-$(1-\epsilon)$, where $\epsilon>0$ is arbitrary small. The same concept applies to other polynomial rates modulated by logarithmic factors, e.g., $\psi(k) = k^2\ln k$ and $\psi(k) = k^2/\ln k$.
\medskip

The logarithmic convergence rate is consistent with established notions. However, fractional-power, linearithmic, and anti-linearithmic rates represent new classifications enabled by P-order's refined framework, to characterize convergence rates under different moduli of continuity and smoothness. The unified $\psi(k)=k^r$ form for $r \in (0,1]$ motivates the fractional-power name, extending the linear case naturally. Other rates can also be defined, but we omit them for brevity.

\Cref{fig:p-order} compares several P-order rates by plotting both $\psi(k)$ (\Cref{fig:p-conv-psi}) and the corresponding normalized error decay $C^{\psi(k)}$ (\Cref{fig:p-conv-error}). Representative rates shown range from logarithmic and fractional-power ($k^{0.5}$) to linear, linearithmic, polynomial ($k^2$), and exponential ($2^k$, P-quadratic). As expected, faster asymptotic growth of $\psi(k)$ yields faster error reduction (\Cref{fig:p-conv-error}). Notably, logarithmic convergence is asymptotically much slower than linear, while the practical error reduction for exponential and polynomial rates appear similar during initial iterations despite the significant difference in their $\psi(k)$ growth. The figure illustrates the wide spectrum of sublinear and superlinear rates distinctly characterized by P-order.

\begin{figure}[htbp]
\begin{subfigure}[b]{0.48\textwidth}
\centering
\begin{tikzpicture}[scale=0.74]
\begin{loglogaxis}[
    xlabel={Iteration ($k$)},
    ylabel={Power Function ($\psi(k)$)},
    xmin=1, xmax=200,  
    ymin=0.5, ymax=3000,  
    major grid style={line width=.2pt,draw=gray!50},
    minor grid style={line width=.1pt,draw=gray!50},
    legend pos=north west,  
    legend style={nodes={scale=0.8, transform shape, anchor=west}, opacity=0.5, text opacity=1},
    legend entries={
        $2^k$,
        $k^2$,
        $k \ln(k)$,
        $k$,
        $\frac{k}{\ln(k + e - 1)}$,
        $k^{0.5}$,
        $\ln(k)$
    },
]


\addplot[domain=1:200, samples=50, smooth, color=red, dotted, thick] {2^x};
\addplot[domain=1:200, samples=50, smooth, color=brown, dashed, thick] {x^2};
\addplot[domain=1:200, samples=50, smooth, color=blue, dashdotted, thick] {x*ln(x)};
\addplot[domain=1:200, samples=50, smooth, color=black, thick] {x};
\addplot[domain=1:200, samples=50, smooth, color=orange, dashdotted, thick] {x/ln(x+exp(1)-1)};
\addplot[domain=1:200, samples=50, smooth, color=purple, dashed, thick] {x^0.5};
\addplot[domain=1:200, samples=50, smooth, color=olive, dotted, thick] {ln(x)};

\end{loglogaxis}
\end{tikzpicture}
\caption{Growth of $\psi(k).$\label{fig:p-conv-psi}}
\end{subfigure}
\hfill
\begin{subfigure}[b]{0.48\textwidth}
\centering
\begin{tikzpicture}[scale=0.74]
\begin{loglogaxis}[
    xlabel={Iteration ($k$)},
    ylabel={Normalized Error ($\|\boldsymbol{x}_k\|/\|\boldsymbol{x}_1\|$)},
    xmin=1, xmax=200,
    ymin=1e-8, ymax=1.2,
    major grid style={line width=.2pt,draw=gray!50},
    minor grid style={line width=.1pt,draw=gray!50},
]

\pgfmathsetmacro{\C}{0.5}

\pgfmathdeclarefunction{logarithmic}{1}{%
  \pgfmathparse{ln(#1)}%
}
\pgfmathdeclarefunction{fractional_power}{1}{%
  \pgfmathparse{#1^0.5}%
}
\pgfmathdeclarefunction{anti_linearithmic}{1}{%
  \pgfmathparse{#1 / ln(#1 + exp(1) - 1)}%
}
\pgfmathdeclarefunction{linear}{1}{%
  \pgfmathparse{#1}%
}
\pgfmathdeclarefunction{linearithmic}{1}{%
  \pgfmathparse{#1 * ln(#1)}%
}
\pgfmathdeclarefunction{quadratic}{1}{%
  \pgfmathparse{#1^2}%
}
\pgfmathdeclarefunction{exponential}{1}{%
  \pgfmathparse{2^#1}%
}

\addplot[domain=1:200, samples=200, smooth, dotted, color=olive, mark=none, thick]
    {(\C^logarithmic(x)) / (\C^logarithmic(1))};
\addplot[domain=1:200, samples=200, smooth, color=purple, dashed, mark=none, thick]
    {(\C^fractional_power(x)) / (\C^fractional_power(1))};
\addplot[domain=1:150, samples=200, smooth, color=orange, dashdotted, mark=none, thick]
    {(\C^anti_linearithmic(x)) / (\C^anti_linearithmic(1))};
\addplot[domain=1:40, samples=200, smooth, color=black, mark=none, thick]
    {(\C^linear(x)) / (\C^linear(1))};
\addplot[domain=1:20, samples=200, smooth, color=blue, dashdotted, mark=none, thick]
    {(\C^linearithmic(x)) / (\C^linearithmic(1))};
\addplot[domain=1:6, samples=10, smooth, color=brown, dashed, mark=none, thick]
    {(\C^(x^2)) / \C};
\addplot[domain=1:6, samples=10, smooth, color=red, dotted, mark=none, thick]
    {(\C^(2^x)) / (\C^2)};

\end{loglogaxis}
\end{tikzpicture}
\caption{$C^{\psi(k)}$ normalized to $\xi_1=1$ with $C=0.5$.\label{fig:p-conv-error}}
\end{subfigure}
\caption{Comparison of P-order with different $\psi(x)$}
\label{fig:p-order}
\end{figure}

\subsection{Relationship with Q-Order}
\label{sec:p-q-relationships}

Having defined the P-order framework and its subclasses, we now formally establish their relationships with the classical Q-order. These connections, particularly involving QUP-order, offer more precision than comparisons available for R-order \cite[Section 9.3]{ortega1970iterative}.

\subsubsection{Q-Linear vs QUP-Linear Convergence}

We begin by examining the direct link between Q-linear convergence and QUP-linear convergence. The following theorem shows that Q-linear convergence is a sufficient condition for QUP-linear convergence with the same asymptotic rate constant.

\begin{theorem}[Q-Linear Implies QUP-Linear]
\label{thm:q-linear-implies-qup-linear}
If a sequence $\{\boldsymbol{x}_k\}$ converges Q-linearly with factor $Q_1 \in (0,1)$, i.e.,
\[ \lim_{k\to\infty} \frac{\|\boldsymbol{x}_{k+1}-\boldsymbol{x}_*\|}{\|\boldsymbol{x}_k-\boldsymbol{x}_*\|} = Q_1, \]
then it also converges QUP-linearly with P-function $\psi(k)=k$ and P-base $C_\psi=Q_1$.
\end{theorem}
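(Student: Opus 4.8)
The plan is to reduce the statement to a telescoping-product estimate followed by taking $k$-th roots. Write $\xi_k = \|\boldsymbol{x}_k - \boldsymbol{x}_*\|$. Since $\xi_k \to 0$ and the ratio $\xi_{k+1}/\xi_k$ has a limit equal to the positive number $Q_1$, we may assume (after discarding finitely many initial terms, which changes neither the hypothesis nor the limit in \eqref{eq:p-quasi-uniform-conv-def}) that $\xi_k > 0$ for every $k$; the degenerate case $\xi_k = 0$ for some finite $k$ is incompatible with the ratio limit being a positive number.

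First I would fix $\epsilon \in (0, Q_1)$ and use the limit $\xi_{k+1}/\xi_k \to Q_1$ to obtain an index $K$ with $Q_1 - \epsilon < \xi_{k+1}/\xi_k < Q_1 + \epsilon$ for all $k \ge K$. Telescoping $\xi_k = \xi_K \prod_{j=K}^{k-1} (\xi_{j+1}/\xi_j)$ for $k > K$ then yields
\[
\xi_K\,(Q_1-\epsilon)^{k-K} \;<\; \xi_k \;<\; \xi_K\,(Q_1+\epsilon)^{k-K}.
\]
Raising this double inequality to the power $1/k$ gives
\[
\xi_K^{1/k}\,(Q_1-\epsilon)^{(k-K)/k} \;<\; \xi_k^{1/k} \;<\; \xi_K^{1/k}\,(Q_1+\epsilon)^{(k-K)/k}.
\]
Since $\xi_K$ is a fixed positive constant, $\xi_K^{1/k}\to 1$, and $(k-K)/k \to 1$ forces $(Q_1\pm\epsilon)^{(k-K)/k}\to Q_1\pm\epsilon$. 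Hence both $\liminf_{k\to\infty}\xi_k^{1/k}$ and $\limsup_{k\to\infty}\xi_k^{1/k}$ lie in $[Q_1-\epsilon,\,Q_1+\epsilon]$. Letting $\epsilon \downarrow 0$ shows $\lim_{k\to\infty}\xi_k^{1/k} = Q_1$. Because $0 < Q_1 < 1$, this is precisely \eqref{eq:p-quasi-uniform-conv-def} with $\psi(k)=k$ and $C_\psi = Q_1$, so $\{\boldsymbol{x}_k\}$ converges QUP-linearly as claimed (equivalently, by \Cref{lem:qup-order}, it satisfies the two-sided little-$\omega$/little-$o$ bound \eqref{eq:p-quasi-uniform-conv-alt}).

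The argument is elementary — essentially the Cauchy root-versus-ratio comparison — so there is no serious obstacle. The only points requiring care are (i) choosing $\epsilon \in (0, Q_1)$ so that $Q_1-\epsilon>0$ and the lower telescoped bound is a genuine decaying exponential, and (ii) justifying that one may assume $\xi_k > 0$ throughout; both are routine. An alternative route I might present instead is to set $a_k = \ln \xi_k$, observe $a_{k+1}-a_k \to \ln Q_1$, and apply the Cesàro-mean (Stolz) theorem to conclude $a_k/k \to \ln Q_1$, i.e.\ $\xi_k^{1/k}\to Q_1$; this is equally short and makes the analogy with the classical root/ratio relationship transparent.
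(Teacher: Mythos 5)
Your proposal is correct and follows essentially the same route as the paper's own proof: bound the ratio $\xi_{k+1}/\xi_k$ between $Q_1-\epsilon$ and $Q_1+\epsilon$ beyond an index $K$, telescope to an exponential two-sided bound, take $k$-th roots, apply the squeeze argument, and let $\epsilon\to 0$ to conclude $\lim_{k\to\infty}\xi_k^{1/k}=Q_1$. The only differences are cosmetic (the paper absorbs the $(Q_1\pm\epsilon)^{-K}$ factors into constants $A_\epsilon,B_\epsilon$ rather than keeping the exponent $(k-K)/k$, and your explicit positivity remark and the Stolz/Ces\`aro aside are fine but not needed).
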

\begin{proof}
Let $\xi_k = \|\boldsymbol{x}_k-\boldsymbol{x}_*\|$. The Q-linear limit definition implies that for any sufficiently small $\epsilon > 0$, there exist constants $A_\epsilon, B_\epsilon > 0$ and an index $K_\epsilon$ such that
\[ A_\epsilon (Q_1 - \epsilon)^k < \xi_k < B_\epsilon (Q_1 + \epsilon)^k \quad \text{for all } k \ge K_\epsilon. \]
Taking the $k$th root yields
\[ A_\epsilon^{1/k} (Q_1 - \epsilon) < \xi_k^{1/k} < B_\epsilon^{1/k} (Q_1 + \epsilon). \]
Since $A_\epsilon^{1/k} \to 1$ and $B_\epsilon^{1/k} \to 1$ as $k \to \infty$, and $\epsilon>0$ can be arbitrarily small, the Squeeze Theorem implies $\lim_{k \to \infty} \xi_k^{1/k} = Q_1$. By \Cref{def:p-quasi-uniform-order}, this signifies QUP-linear convergence with $\psi(k)=k$ and $C_\psi=Q_1$.
\end{proof}

While \Cref{thm:q-linear-implies-qup-linear} shows Q-linear implies QUP-linear, a partial converse holds under the crucial assumption that the Q-limit itself exists, leading to the following relationship between the constants:

\begin{cor}[Partial Converse for QUP-Linear]
\label{cor:qup-linear-implies-q-linear}
If $\{\boldsymbol{x}_k\}$ converges QUP-linearly ($\psi(k)=k$) with P-base $C_\psi$, and the Q-limit $Q_1 = \lim_{k\to\infty} \frac{\|\boldsymbol{x}_{k+1} - \boldsymbol{x}_*\|}{\|\boldsymbol{x}_k - \boldsymbol{x}_*\|}$ exists in some norm, then $Q_1 = C_\psi$.
\end{cor}
\begin{proof}
Existence of the Q-limit $Q_1$ implies, by \Cref{thm:q-linear-implies-qup-linear}, QUP-linear convergence with P-base $Q_1$. Since the P-base is norm-independent (\Cref{lem:p-norm-independence}), this $Q_1$ must equal the sequence's given P-base $C_\psi$.
\end{proof}

\Cref{thm:q-linear-implies-qup-linear} and \Cref{cor:qup-linear-implies-q-linear} highlight some important distinctions between different notions of linear convergence: First, Q-linear convergence (and thus QUP-linear convergence, by \Cref{thm:q-linear-implies-qup-linear}) does \emph{not} guarantee the stronger UP-linear condition ($\xi_k = \Theta(C_\psi^k)$). UP-linear forbids significant oscillations relative to the base rate $C_\psi^k$, whereas QUP-linear allows them as long as the $k$th root converges. The sequence $\xi_k = Q_1^k k^a$ ($a>0$) illustrates this: it is Q-linear (with limit $Q_1=C_\psi$) but not UP-linear because the relative factor $\xi_k/Q_1^k = k^a$ is unbounded.

Second, \Cref{cor:qup-linear-implies-q-linear} requires that the Q-limit $Q_1$ exists. QUP-linear convergence ($\lim \xi_k^{1/k} = C_\psi$ exists) does \emph{not} imply Q-linear convergence ($\lim \xi_{k+1}/\xi_k = Q_1$ exists) or even the weaker EQ-linear bounds ($\alpha_1 \xi_k \le \xi_{k+1} \le \beta_1 \xi_k$). For example, the sequence $\xi_k = (C_\psi)^k k^{(-1)^k}$ is QUP-linear with P-base $C_\psi$, but it is neither Q-linear nor EQ-linear because the ratio $\xi_{k+1}/\xi_k$ oscillates unboundedly. Furthermore, we cannot replace Q-linear with the weaker EQ-linear \eqref{eq:exact-q-order} in \Cref{thm:q-linear-implies-qup-linear} or \Cref{cor:qup-linear-implies-q-linear}, since EQ-linear only ensures $\alpha_1 \le \liminf \xi_k^{1/k} \le \limsup \xi_k^{1/k} \le \beta_1$, which does not guarantee the existence of the limit $C_\psi = \lim \xi_k^{1/k}$ for QUP-linear convergence.

\subsubsection{Q-Superlinear and UP-Superlinear Convergence}
Unlike the linear case, Q-superlinear convergence implies the stronger UP-superlinear condition with the corresponding exponential power function, ensuring greater regularity.

\begin{theorem}[Q-Superlinear Implies UP-Superlinear]
\label{thm:q-superlinar-implies-up}
If a sequence converges Q-superlinearly with order $q>1$ and factor $Q_q \in (0, \infty)$, i.e.,
$\lim_{k\to\infty}\frac{\|\boldsymbol{x}_{k+1}-\boldsymbol{x}_*\|}{\|\boldsymbol{x}_k-\boldsymbol{x}_*\|^q} = Q_q,$
then it converges UP-superlinearly with P-function $\psi(k)=q^k$ (UP-order-$q$). The P-base $C_\psi$ may depend on $\boldsymbol{x}_0$.
\end{theorem}
\begin{proof}
Let $\xi_k = \|\boldsymbol{x}_k-\boldsymbol{x}_*\|$ and $f(k) = -\ln \xi_k$. The Q-superlinear condition $\xi_{k+1}/\xi_k^q \to Q_q$ gives the recurrence
$f(k+1) = q f(k) + d(k)$, where $d(k) \to -\ln Q_q$
with $d(k)$ bounded. Since $q>1$, dividing by $q^{k+1}$ and summing yields convergence of $f(k)/q^k$ to a limit $s$ (as $\sum d(k)/q^{k+1}$ converges absolutely).

Writing $f(k) = sq^k + R_k$ with $R_k = -q^k \sum_{j=k}^\infty d(j)/q^{j+1}$, we get $R_k = \mathcal{O}(1)$ since $d(j)$ is bounded. Thus $f(k) = sq^k + \mathcal{O}(1)$. As $f(k) \to \infty$, we need $s > 0$. Exponentiating:
\[ \xi_k = e^{-f(k)} = e^{-sq^k - \mathcal{O}(1)} = e^{-sq^k} e^{\mathcal{O}(1)} = \Theta(e^{-sq^k}). \]

With P-base $C_\psi = e^{-s} \in (0,1)$, we have $\xi_k = \Theta(C_\psi^{q^k})$, establishing UP-superlinear convergence with P-function $\psi(k)=q^k$.
\end{proof}

\Cref{thm:q-superlinar-implies-up} establishes that Q-superlinear convergence implies the stronger UP-superlinear condition ($\xi_k = \Theta(C_\psi^{q^k})$), which in turn clearly implies QUP-superlinear convergence. However, QUP-superlinear does not imply UP or Q-superlinear convergence. The sequence $\xi_k = C^{q^k} k^a$ ($a\neq 0$) demonstrates this: it is QUP-superlinear (its $q^k$th root limit is $C$) but fails to be UP-superlinear (and thus not Q-superlinear) because $\xi_k / C^{q^k} = k^a$ violates the required $\Theta(1)$ bounds.

Furthermore, a key equivalence specific to the superlinear case ($q>1$) exists between UP-order and EQ-order: UP-superlinear convergence is equivalent to EQ-superlinear convergence \eqref{eq:exact-q-order}. This holds because the proof of \Cref{thm:q-superlinar-implies-up} relies only on the boundedness of the ratio $\xi_{k+1}/\xi_k^q$ (showing EQ is sufficient for UP), while conversely, UP-superlinear convergence inherently ensures this ratio is bounded (showing UP implies EQ). Notably, this equivalence does not extend to linear convergence ($q=1$), as noted earlier.

\subsubsection{Hierarchy of P-, Q-, and R-Orders}
\label{subsec:p-q-hierarchy}
Before concluding this section, \Cref{fig:p-q-hierarchy} visualizes the hierarchy for linear and exponential rates, where P-, Q-, and R-orders apply. R-order represents the broadest class \cite[Def.~9.2.5]{ortega1970iterative} but lacks precision, particularly for order 1 (cf. \Cref{rem:r-order-ambiguity}). As depicted, P-order and QUP-order refine R-order while encompassing Q-order. For $q=1$, UP-linear only partially overlaps Q-linear; however, for $q>1$ (i.e., QUP-exponential order), UP-order is equivalent to EQ-order and contains Q-order. 

\begin{figure}
\begin{subfigure}[b]{0.48\textwidth}
\centering
\begin{tikzpicture}[
    every node/.style={font=\scriptsize, align=center}, 
    set/.style={circle, minimum size=2cm, draw=black, thick},
    subset/.style={set, fill=#1},
  ]

  \node[rectangle, minimum width=6cm, minimum height=6cm, draw=black, thick, fill=lightgray!30] (universe) at (0,-0.75) {}; 
  \node[above left=-0.6cm and -2cm of universe] {R-order-1 \cite{ortega1970iterative}}; 

  \node[subset=yellow!30, minimum width=5.5cm] (p_linear) at (0, -0.8) {};
  \node[above=-0.9cm of p_linear] {P-Linear};

  \node[subset=red!30, minimum width=4cm] (qup_linear) at (0, -1.2) {};
  \node[above=-1.2cm of qup_linear] {\textbf{QUP-Linear}};

  \node[subset=blue!30, minimum width=2cm, opacity=0.5, text opacity=1, text width=1.7cm, align=left] at (-0.84, -1.5) {UP-Linear};

  \node[subset=cyan!30, minimum width=2cm, opacity=0.5, text opacity=1] at (0.84, -1.5) {Q-Linear};
\end{tikzpicture}
\caption{Linear convergence.\label{fig:p-linear}}
\end{subfigure}
\hfill
\begin{subfigure}[b]{0.48\textwidth}
\centering
\begin{tikzpicture}[
    every node/.style={font=\scriptsize, align=center}, 
    set/.style={circle, minimum size=2cm, draw=black, thick},
    subset/.style={set, fill=#1},
  ]

  \node[rectangle, minimum width=6cm, minimum height=6cm, draw=black, thick, fill=lightgray!30] (universe) at (0,-0.75) {}; 
  \node[above left=-0.6cm and -2cm of universe] {R-order-$q$ \cite{ortega1970iterative}};

  \node[subset=yellow!30, minimum width=5.5cm] (p_superlinear) at (0, -0.8) {};
  \node[above=-0.9cm of p_superlinear] {P-order-$q$};

  \node[subset=red!30, minimum width=4.5cm] (banach) at (0, -1.2) {};
  \node[above=-1cm of banach] {\textbf{QUP-order-$q$}};

  \node[subset=blue!30, opacity=0.5, minimum width=3.2cm, text opacity=1] (up_superlinear) at (0, -1.75) {};
  \node[above=-1cm of up_superlinear] {UP-order-$q$};

  \node[subset=cyan!30, minimum width=1cm, opacity=0.5, text opacity=1] at (0, -2.2) {Q-order-$q$};
\end{tikzpicture}
\caption{Exponential convergence ($q>1$).\label{fig:p-superlinear}}
\end{subfigure}
\caption{Hierarchy of linear ($\psi=k$) and exponential ($\psi=q^k$, $q>1$) convergence rates. Not to scale (especially for the substantial UP/Q-linear overlap).}
\label{fig:p-q-hierarchy}
\end{figure}

The fundamental reason underneath this hierarchy is the different requirements of continuity assumptions of the derivatives for the different concepts that are associated with different remainder forms of Taylor's theorem (see, e.g., \cite{rudin1976principles} for different forms). In particular, the Peano form of the remainder offers qualitative bounds ($o(h^K)$) under the continuity of the $K$th derivative, which is sufficient to establish QUP-order (see, e.g., \Cref{thm:linear-superlinear}). The integral form provides a more precise bound ($\mathcal{O}(h^{K+1})$) under stronger continuity conditions (the absolute continuity of the $K$th derivative), and it is sufficient for establishing UP-order~(see \Cref{rem:high-order}). This is consistent with the fact that UP-order implies QUP-order. In contrast, proving Q-superlinear up to order $(K+1)$ \emph{requires} $C^{K+1}$ continuity (see, e.g., \cite[Property 6.4]{quarteroni2007numerical}), a condition stronger than the requirement of the strictest form of Taylor's theorem (the Lagrange form), which only requires the existence of the $(K+1)$th derivative. This is consistent with the fact that Q-superlinear implies UP-superlinear, but not vice versa. This connection illustrates the new insights that P-order framework brings to the convergence analysis of iterative methods, by mapping naturally to different remainder forms of Taylor's theorem.

\section{Refined Convergence Analysis of Fixed-Point Iterations}
\label{sec:fixed-point}

We now apply QUP-order to develop a refined analysis the fixed-point iteration
\begin{equation}\label{eq:fixed-point}
\boldsymbol{x}_{k+1} = \boldsymbol{g}(\boldsymbol{x}_k),
\end{equation}
where $\boldsymbol{g}\colon \mathbb{R}^n \to \mathbb{R}^n$ is continuously differentiable near the fixed point $\boldsymbol{x}_*$ (so that $\boldsymbol{x}_* = \boldsymbol{g}(\boldsymbol{x}_*)$).
We first refine a classical result using QUP-order for linear and superlinear convergence by relaxing the continuity conditions. Then, we present new results on sublinear fractional-power convergence.

\subsection{Linear and Superlinear with Weaker Continuity}
\label{sec:linear-superlinear-fp}

The classical Q-linear analysis requires $\lim_{k\to\infty} \|\xi_{k+1}\|/\|\xi_k\| = Q_1$, a limit that may not exist in standard $\ell^p$ norms even when $\rho(\boldsymbol{J})<1$ ($\boldsymbol{J}=\boldsymbol{J}_g(\boldsymbol{x}_*)$). Some textbooks (e.g., \cite{stoer2002introduction}) assume the existence of such norm, while others (e.g., \cite{quarteroni2007numerical}) establish the connection between $\rho(\boldsymbol{J})$ and a subordinate norm via the Rota--Strang theorem~\cite{rota1960note}. R-order analysis, while norm-independent, often establishes rates by comparison with Q-linear sequences \cite{ortega1970iterative}. More importantly, proving Q-linear requires $C^1$ continuity of $\boldsymbol{g}$. In contrast, our analysis differs in three ways: First, it directly yields the norm-independent rate $\lim_{k\to\infty}\|\boldsymbol{\xi}_k\|^{1/k} = \rho(\boldsymbol{J})$. Second, it assumes differentiability instead of $C^1$ continuity of $\boldsymbol{g}$. Third, it proves a sharp rate under a general-position assumption (where the component of the initial error in the dominant eigenspace of $\boldsymbol{J}$ is nonzero), instead of a lower bound. These differences of continuity requirement provide new insights into fixed-point iterations.

\begin{thm}[QUP-Linear and Superlinear]
\label{thm:linear-superlinear}
Let $\boldsymbol{g}$ be differentiable in a neighborhood of a fixed point $\boldsymbol{x}_*=\boldsymbol{g}(\boldsymbol{x}_*)$, with Jacobian $\boldsymbol{J}=\boldsymbol{J}_g(\boldsymbol{x}_*)$. Let $\mathcal{E}_\rho$ be the generalized eigenspace of $\boldsymbol{J}$ for eigenvalues with magnitude $\rho(\boldsymbol{J})$, and $\boldsymbol{P}_\rho$ the corresponding spectral projector. Assume the initial error $\boldsymbol{\xi}_0=\boldsymbol{x}_0-\boldsymbol{x}_*$ satisfies the \emph{general position} condition $\boldsymbol{P}_\rho\,\boldsymbol{\xi}_0\neq \boldsymbol{0}$. If $0 \le \rho(\boldsymbol{J}) < 1$, then for $\boldsymbol{x}_0$ sufficiently close to $\boldsymbol{x}_*$, the iteration $\boldsymbol{x}_{k+1}=\boldsymbol{g}(\boldsymbol{x}_k)$ converges to $\boldsymbol{x}_*$ such that
\[ \lim_{k\to\infty}\|\boldsymbol{x}_k-\boldsymbol{x}_*\|^{1/k}=\rho(\boldsymbol{J}). \]
Specifically, convergence is QUP-linear if $0<\rho(\boldsymbol{J})<1$ and QUP-superlinear if $\rho(\boldsymbol{J})=0$.
\end{thm}

\begin{proof}
Let $\boldsymbol{\xi}_k=\boldsymbol{x}_k-\boldsymbol{x}_*$. By Taylor's theorem in Peano's form, $\boldsymbol{\xi}_{k+1} = \boldsymbol{J}\boldsymbol{\xi}_k + \boldsymbol{r}(\boldsymbol{\xi}_k)$, where $\|\boldsymbol{r}(\boldsymbol{\xi}_k)\| = o(\|\boldsymbol{\xi}_k\|)$ for any differentiable function $\boldsymbol{g}$.

Consider the spectral decomposition $\boldsymbol{\xi}_k=\boldsymbol{P}_\rho\boldsymbol{\xi}_k+\boldsymbol{Q}_\rho\boldsymbol{\xi}_k$, where $\boldsymbol{P}_\rho$ projects onto the generalized eigenspace of $\boldsymbol{J}$ corresponding to eigenvalues with magnitude $\rho(\boldsymbol{J})$, and $\boldsymbol{Q}_\rho=\boldsymbol{I}-\boldsymbol{P}_\rho$. Since $\boldsymbol{P}_\rho$ and $\boldsymbol{J}$ commute, and $\rho(\boldsymbol{Q}_\rho\boldsymbol{J})<\rho(\boldsymbol{J})$ (when $\rho(\boldsymbol{J})>0$), the dominant dynamics are governed by $\boldsymbol{P}_\rho\boldsymbol{\xi}_k$.

The general position condition $\boldsymbol{P}_\rho\,\boldsymbol{\xi}_0\neq \boldsymbol{0}$ ensures $\boldsymbol{\xi}_k$ does not lie entirely in the faster-decaying subspace range$(\boldsymbol{Q}_\rho)$, thus $\|\boldsymbol{P}_\rho\boldsymbol{\xi}_k\|$ determines the asymptotic behavior of $\|\boldsymbol{\xi}_k\|$. Using induction on the Taylor expansion, for any $\epsilon>0$, there exist constants $C_1,C_2>0$ such that for all sufficiently large $k$,
\begin{equation}\label{eq:xi-k-inequality}
C_1\,\bigl(\rho(\boldsymbol{J})-\epsilon\bigr)^k \le \|\boldsymbol{\xi}_k\| \le C_2\,\bigl(\rho(\boldsymbol{J})+\epsilon\bigr)^k.
\end{equation}
Taking the $k$th root and letting $\epsilon\to 0$ yields $\lim_{k\to\infty}\|\boldsymbol{\xi}_k\|^{1/k}=\rho(\boldsymbol{J})$. The QUP conclusions follow directly from \Cref{def:p-quasi-uniform-order} based on the value of $\rho(\boldsymbol{J})$.
\end{proof}

In \Cref{thm:linear-superlinear}, Peano's form of Taylor's remainder leads to \eqref{eq:xi-k-inequality}, so we can only prove QUP-linear but not UP-linear or Q-linear, since the latter two require stronger continuity conditions. This result demonstrates a key advantage of QUP-order, as we emphasized in \Cref{subsec:p-q-hierarchy}. Another important aspect of the theorem is the general-position assumption. Without the assumption, the sequence may converge superlinearly for $\boldsymbol{x}_0$ in a lower-dimensional (i.e., measure-zero) submanifold of $B(\boldsymbol{x}_*, \delta)$ when $\boldsymbol{J_g}(\boldsymbol{x}_*)$ is singular but $\rho(\boldsymbol{J_g})>0$.

\begin{rem}[Higher-Order Convergence]\label{rem:high-order}
Similar to \Cref{thm:linear-superlinear}, we can establish QUP-order-$q$ convergence with $q>1$ under $C^{q-1,1}$ continuity and non-vanishing one-sided $q$th directional derivatives at $\boldsymbol{x}_*$ (sufficient condition for the integral remainder form of Taylor's theorem, instead of $C^q$ continuity). Furthermore, we can establish UP-order-$q$ if the $q$th derivative exists (i.e., the precise condition for the mean-value remainder form of Taylor's theorem). These distinction further reinforce the advantage of having the hierarchy of P-order, QUP-order, and UP-order, as we emphasized in \Cref{subsec:p-q-hierarchy}. We omit the theorem and proof due to space limitations. (See the supplemental material.)
\end{rem}

\subsection{A Sufficient Condition for Fractional-Power QUP-Order}
\label{sec:sublinear-fractional}

While $\rho(\boldsymbol{J_g}(\boldsymbol{x}_*)) = 1$ typically indicates sublinear convergence, it is insufficient to determine the rate. QUP-order enables a refined analysis, for instance, when the error reduction involves specific asymptotic behavior. The following theorem establishes a sufficient condition for QUP-fractional-power convergence, characterized by a logarithmic modulation of the error reduction projected onto the dominant eigenspace.

\begin{thm}[Iteration Functions with Fractional-Power Convergence Rate]
\label{thm:fp-fractional-power}
Let $\boldsymbol{g}:\mathcal{U}\to\mathbb{R}^n$ be continuously differentiable in an open neighborhood $\mathcal{U}$ of a fixed point $\boldsymbol{x}_*=\boldsymbol{g}(\boldsymbol{x}_*)$. Assume:
\begin{enumerate}
\item $\rho(\boldsymbol{J_g}(\boldsymbol{x}))<1$ for $\boldsymbol{x}\in \mathcal{U}\setminus\{\boldsymbol{x}_*\}$, and $\rho(\boldsymbol{J_g}(\boldsymbol{x}_*))=1$.
\item Let $\mathcal{E}_\rho$ be the generalized eigenspace for eigenvalues of $\boldsymbol{J_g}(\boldsymbol{x}_*)$ with unit modulus, and $\boldsymbol{P}_\rho$ the corresponding projector. There exist $s>0$ and $C_0$ such that for some $\ell^p$ norm,
\begin{equation}\label{eq:sublinear-condition}
\lim_{x\rightarrow x_*}\frac{\|\boldsymbol{P}_\rho (\boldsymbol{x}-\boldsymbol{g}(\boldsymbol{x}))\|}{
\|\boldsymbol{P}_\rho (\boldsymbol{x}-\boldsymbol{x}_*)\|}\Bigl(-\ln \|\boldsymbol{P}_\rho (\boldsymbol{x}-\boldsymbol{x}_*)\|\Bigr)^{s}
=C_0.
\end{equation}
\end{enumerate}
Then, for any $\boldsymbol{x}_0$ sufficiently close to $\boldsymbol{x}_*$ in general position (i.e., $\boldsymbol{P}_\rho(\boldsymbol{x}_0-\boldsymbol{x}_*)\neq\boldsymbol{0}$), the sequence $\boldsymbol{x}_{k+1}=\boldsymbol{g}(\boldsymbol{x}_k)$ converges to $\boldsymbol{x}_*$ with QUP-fractional-power P-order $1/(s+1)$. Specifically, $\psi(k) = k^{1/(s+1)}$, and
\[
\lim_{k\to\infty}\|\boldsymbol{x}_k-\boldsymbol{x}_*\|^{1/\psi(k)}= \lim_{k\to\infty}\|\boldsymbol{x}_k-\boldsymbol{x}_*\|^{k^{-1/(s+1)}}=C_{\psi}
\]
holds in all $\ell^p$ norms, where $ C_{\psi} = \exp(-(C_0(s+1))^{1/(s+1)})$.
\end{thm}

\begin{proof}
Let $\boldsymbol{\xi}_
k = \boldsymbol{x}_k - \boldsymbol{x}_* = \boldsymbol{P}_\rho\boldsymbol{\xi}_k + \boldsymbol{Q}_\rho\boldsymbol{\xi}_k$, where $\boldsymbol{P}_\rho$ projects onto the unit-modulus generalized eigenspace\footnote{The generalized eigenspace of a matrix $\boldsymbol{A}$ corresponding to eigenvalue $\lambda$ is the set of all vectors $\boldsymbol{v}$ such that $(\boldsymbol{A}-\lambda\boldsymbol{I})^k\boldsymbol{v}=\boldsymbol{0}$ for some positive integer $k$. See, e.g., \cite[Ch. 6]{horn2013matrix}.} of $\boldsymbol{J_g}(\boldsymbol{x}_*)$. Since $\rho(\boldsymbol{J_g}(\boldsymbol{x}_*) |_{\text{range}(\boldsymbol{Q}_\rho)}) < 1$, we have $\|\boldsymbol{Q}_\rho\boldsymbol{\xi}_k\| = o(\gamma^k)$ for some $\gamma<1$. Let $u_k = \|\boldsymbol{P}_\rho\boldsymbol{\xi}_k\|$. From $\boldsymbol{\xi}_{k+1} = \boldsymbol{\xi}_k - (\boldsymbol{x}_k - \boldsymbol{g}(\boldsymbol{x}_k))$ and Condition \eqref{eq:sublinear-condition}, we obtain
\[ u_{k+1} = u_k\left(1 - C_0(-\ln u_k)^{-s} + o((-\ln u_k)^{-s})\right). \]

Setting $v_k = -\ln u_k$ and taking logarithms yields $v_{k+1} = v_k + C_0 v_k^{-s} + o(v_k^{-s})$. This standard recurrence implies $v_k \sim ((s+1)C_0 k)^{1/(s+1)}$ (see \cite[Prop. 6.3.8]{tao2016analysis}). Since $u_k = e^{-v_k} = e^{-\Theta(k^{1/(s+1)})}$ decays sub-exponentially, it dominates $\|\boldsymbol{Q}_\rho\boldsymbol{\xi}_k\|$, giving $\|\boldsymbol{\xi}_k\| \sim u_k$. The QUP-order limit is therefore
\[
\lim_{k\to\infty}\|\boldsymbol{\xi}_k\|^{k^{-1/(s+1)}} = \lim_{k\to\infty}e^{-v_k/k^{1/(s+1)}} = e^{-((s+1)C_0)^{1/(s+1)}} = C_{\psi},
\]
confirming QUP-order $1/(s+1)$ with $\psi(k)=k^{1/(s+1)}$ and $C_\psi \in (0,1)$.
\end{proof}

We will apply this theorem to derive fractional-power convergence rates for Newton's method and gradient descent in \Cref{ex:newton-fractional} and \Cref{ex:gd_designed_fractional}, respectively, which involve verifying the two conditions in \Cref{thm:fp-fractional-power}.

\section{Example New Sublinear Rates of Fixed-Point-Type Methods}
\label{sec:examples-sublinear}

Having established the P-order framework and analyzed general fixed-point iterations, we now apply it to demonstrate novel sublinear convergence behaviors for two prominent iterative methods: \emph{Newton's method for root-finding} and \emph{gradient descent for optimization}. Both can be viewed as specific instances of the fixed-point iteration $\boldsymbol{x}_{k+1} = \boldsymbol{g}(\boldsymbol{x}_k)$. We present examples exhibiting fractional-power, linearithmic, and anti-linearithmic QUP-order rates, highlighting the framework's ability to capture convergence patterns beyond those described by classical Q-order or R-order analysis.

\subsection{Newton's Method for Nonlinear Equations}
\label{subsec:newton_examples}

We first consider Newton's method (a.k.a. the Newton--Raphson method) applied to scalar nonlinear equations $f(x)=0$. The iteration is defined by the fixed-point function
\begin{equation}\label{eq:newton}
x_{k+1} = g(x_k), \quad \text{where} \quad g(x) = x - \frac{f(x)}{f'(x)}.
\end{equation}
For systems $\boldsymbol{f}(\boldsymbol{x})=\boldsymbol{0}$, the iteration function is $ \boldsymbol{g}(\boldsymbol{x}) = \boldsymbol{x} - \boldsymbol{J_f}(\boldsymbol{x})^{-1}\boldsymbol{f}(\boldsymbol{x}) $. These fixed-point iterations can be analyzed using Theorems~\ref{thm:linear-superlinear} and \ref{thm:fp-fractional-power}, but we also consider new results for linearithmic and anti-linearithmic convergence rates.

Classical analysis \cite{kantorovich1948functional, traub1964iterative, ortega1970iterative} establishes quadratic (or higher order) convergence for simple roots ($f'(x_*) \neq 0$). For singular roots ($f'(x_*) = 0$ or $\boldsymbol{J_f}(\boldsymbol{x}_*)$ singular), behavior is more complex \cite{rall1966multiple, reddien1978newton, decker1980newton, decker1983convergence, griewank1985solving}. Notably, for analytic functions\footnote{\label{ft:analytic}A function $f\colon \mathbb{R}^n \to \mathbb{R}^m$ is said to be \emph{analytic} at a point $\boldsymbol{x}_0$ if it can be represented by a convergent power series in a neighborhood of $\boldsymbol{x}_0$. Analytic functions are infinitely differentiable, but the opposite is not necessarily true. A simple example of a function that is infinitely differentiable but not analytic is the function $f(x) = e^{-1/x^2}$ for $x\neq 0$ and $f(x) =0$ for $x = 0$.} with multiple roots of finite multiplicity, convergence is linear \cite{decker1980newton}. While sublinear convergence was noted as possible \cite{griewank1985solving}, detailed examples and analysis of specific sublinear rates for Newton's method appear absent from the literature.

We present examples where Newton's method exhibits diverse, previously unreported sublinear QUP-order rates, specifically fractional-power, linearithmic, and anti-linearithmic. Logarithmic convergence is relatively straightforward (e.g., for the function in \Cref{ft:analytic}) and not detailed here. These examples necessarily involve \emph{non-analytic} functions; this is a corollary of the results in \cite{decker1980newton}, where Decker and Kelley showed linear convergence for analytic functions with roots of finite multiplicity along with the trivial convergence for analytic functions vanishing to infinite order. The non-analytic examples highlight the sensitivity of Newton's method to local function properties near singularities. We focus on scalar cases for simplicity, though the concepts extend to multivariate systems.

\subsubsection{Fractional-Power Convergence}

We first consider cases where Newton's method converges with a sublinear fractional-power rate. Such rates can occur when the derivative vanishes at the root ($f'(x_*) = 0$) but relatively slowly, corresponding to the conditions of \Cref{thm:fp-fractional-power}.

\begin{exam}[Fractional-Power Convergence of Newton's Method]
\label{ex:newton-fractional}
Consider the scalar equation $f(x)=0$ with $f(x)$ defined by
\begin{equation}\label{eq:fractional_power_newton}
f(x)=
\begin{cases}
\exp\Bigl(-c\,\bigl(-\ln|x-\alpha|\bigr)^{1/r}\Bigr), & x\neq\alpha,\\[1ex]
0, & x=\alpha,
\end{cases}
\end{equation}
where $c>0$, $\alpha\in\mathbb{R}$, and $0<r<1$. This function has a root $f(\alpha)=0$, is infinitely differentiable, but not analytic at $\alpha$.
To apply Newton's method \eqref{eq:newton}, we need $f'(x)$. Let $L(x) = -\ln|x-\alpha|$. A calculation using the chain rule yields
\[ f'(x) = f(x)\, \frac{c}{r} \bigl(L(x)\bigr)^{\frac{1}{r}-1}\, \frac{1}{x-\alpha}. \]
The Newton iteration function is $g(x) = x - f(x)/f'(x)$. Substituting $f'(x)$, we obtain
\[
g(x) = x - \frac{f(x)}{f(x) \cdot \frac{c}{r} (L(x))^{\frac{1}{r}-1} \frac{1}{x-\alpha}}
= x - (x-\alpha)\frac{r}{c} \bigl(L(x)\bigr)^{1-\frac{1}{r}}.
\]
To check Condition \eqref{eq:sublinear-condition} of \Cref{thm:fp-fractional-power}, we examine the term involving $x-g(x)$. From the expression for $g(x)$, we obtain
\[
x - g(x) = (x-\alpha)\frac{r}{c} (-\ln|x-\alpha|)^{1-1/r}.
\]
Let $\epsilon = |x-\alpha|$. We then form the required limit expression
\[
\frac{x-g(x)}{x-\alpha} (-\ln\epsilon)^{1/r-1} = \left( \frac{r}{c} (-\ln\epsilon)^{1-1/r} \right) (-\ln\epsilon)^{1/r-1} = \frac{r}{c}.
\]
Thus, Condition \eqref{eq:sublinear-condition} holds (noting $P_\rho=1$ in the scalar case) with constants $s = 1/r - 1 > 0$ and $C_0 = r/c$. Therefore, \Cref{thm:fp-fractional-power} guarantees convergence with QUP-order $1/(s+1)=r$, corresponding to the power function $\psi(k)=k^r$.
\end{exam}

\Cref{fig:fracx} illustrates this convergence numerically for $f(x)=\exp\{-(-\ln x)^{1/r}\}$ (\eqref{eq:fractional_power_newton} with $\alpha=0, c=1$) using $r \in \{0.25, 0.5, 0.75\}$ and $x_0=0.01$. For comparison, linear ($f(x)=x^2$) and quadratic ($f(x)=x-x^2$) convergence are also shown. The observed error decay rates align with the theoretical fractional-power orders derived in \Cref{ex:newton-fractional}, demonstrating convergence distinctly slower than linear.

\begin{figure}[htb]
\begin{minipage}[b]{0.49\columnwidth}
    \centering
    \includegraphics[width=\columnwidth]{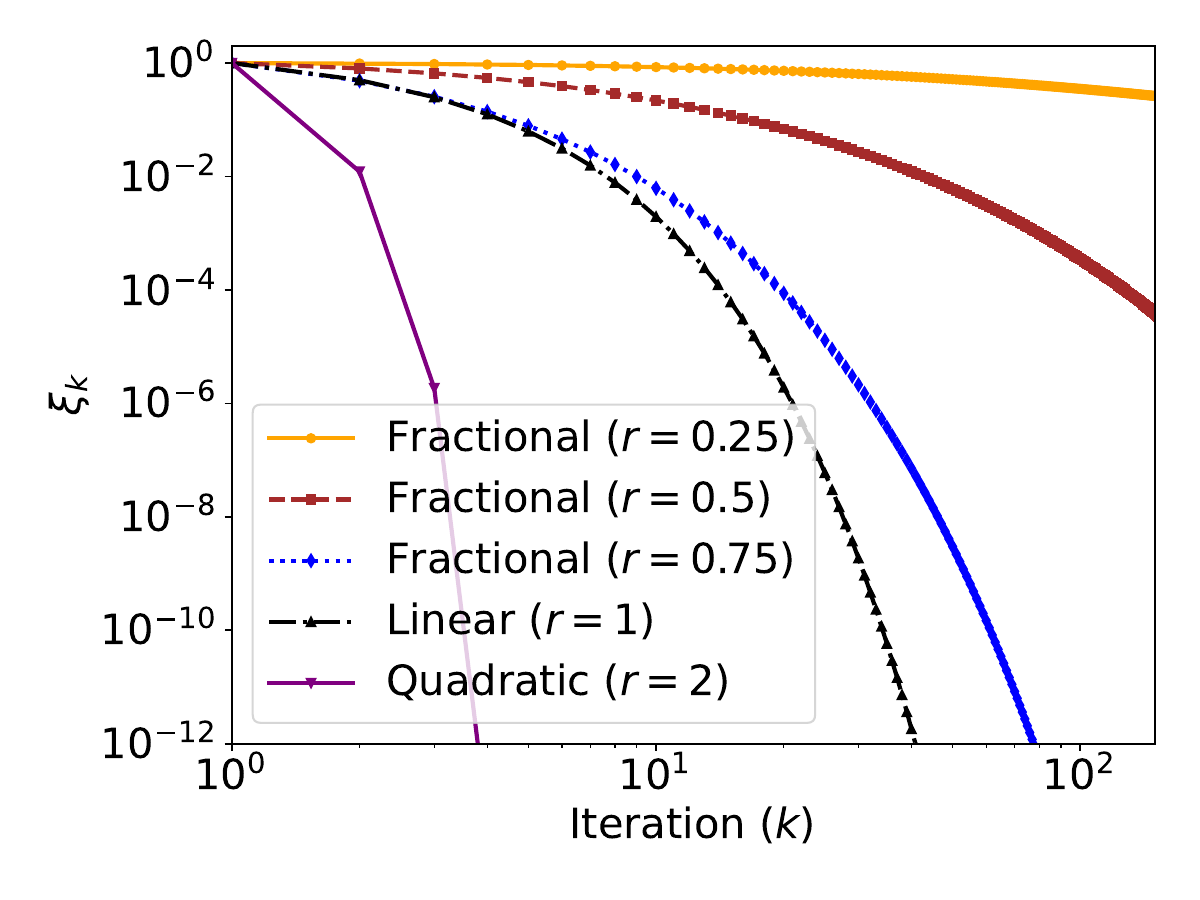}
    \captionof{figure}{Examples of sublinear fractional-power rates of Newton's method.}
    \label{fig:fracx}
  \end{minipage}
  \hfill
  \begin{minipage}[b]{0.49\columnwidth}
    \centering
    \includegraphics[width=\columnwidth]{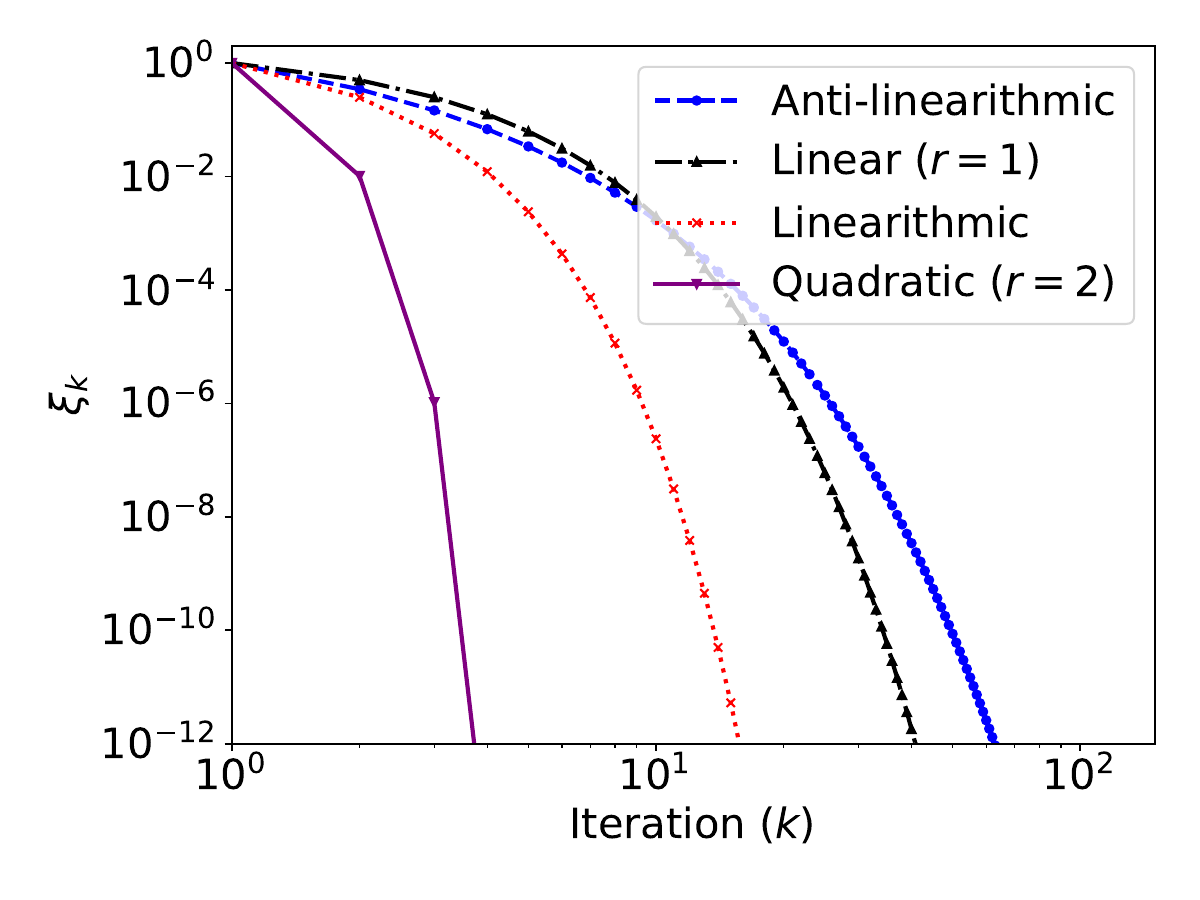}
    \captionof{figure}{Examples of (anti-)linearithmic rates of Newton's method.}
    \label{fig:lithx}
  \end{minipage}
\end{figure}

\subsubsection{Linearithmic and Anti-Linearithmic Convergence}

We now present examples demonstrating convergence rates involving logarithmic factors combined with linear or sublinear terms in the exponent base $k$, namely linearithmic ($\psi(k)=k\ln k$) and anti-linearithmic ($\psi(k)=k/\ln k$) rates, illustrated in \Cref{fig:lithx}. These rates, not previously explored for Newton's method, highlight the refined classification provided by QUP-order. They occupy intermediate positions: linearithmic is superlinear but slower than any exponential P-order-$r$ ($r>1$), while anti-linearithmic is sublinear but faster than any fractional-power P-order-$r$ ($r<1$). Understanding these nuances clarifies the limitations of frameworks like R-order (cf. \Cref{rem:r-order-ambiguity}).

\begin{exam}[Linearithmic Convergence]
\label{ex:linearithmic}
Define function $f(x)$ with root at $x=\alpha$ as
\begin{equation}\label{eq:lith_func_modified}
f(x)=
\begin{cases}
|x-\alpha|\,
\exp\!\Bigl(-\frac{1}{2}\Bigl(\ln(-\ln|x-\alpha|)\Bigr)^2\Bigr), &
x\neq \alpha,\\[1ex]
0, & x=\alpha.
\end{cases}
\end{equation}
This function is infinitely differentiable for $x\neq\alpha$ but not analytic at $x=\alpha$.
Let $\epsilon = |x-\alpha|$ and $L = -\ln\epsilon$. Using the auxiliary function $t(x) = \ln f(x) = -L - \frac{1}{2}(\ln L)^2$, we compute the Newton iteration function $g(x)=x-f(x)/f'(x) = x-1/t'(x)$ where
\[ t'(x) = \frac{1}{\epsilon}\left(1 + \frac{\ln L}{L}\right). \]

For the error ratio, we obtain
\[ \frac{\epsilon_{k+1}}{\epsilon_k} \approx \frac{\ln L}{L}. \]
This implies $k \sim L/\ln L$, giving the recurrence $\epsilon_{k+1} \sim \epsilon_k/k$. Solving asymptotically yields $\epsilon_k \sim \epsilon_0 / (k-1)!$, and applying Stirling's approximation gives $-\ln\epsilon_k \approx k\ln k - k$. Therefore,
\[ \epsilon_k = \Theta\Bigl( C^{k\ln k} \Bigr), \]
where $C=e^{-1} \in (0,1)$, establishing UP-linearithmic convergence with $\psi(k)=k\ln k$.
\end{exam}

\begin{exam}[Anti-Linearithmic Convergence]
\label{ex:anti-linearithmic}
Consider the function $f(x)$ for $\alpha \in \mathbb{R}$,
\begin{equation}\label{eq:anti_lith_func_modified}
f(x)=
\begin{cases}
|x-\alpha|^{\,\ln(-\ln|x-\alpha|)-1}, & x\neq \alpha,\\[1ex]
0, & x=\alpha.
\end{cases}
\end{equation}
This infinitely differentiable (but non-analytic) function has a root at $f(\alpha)=0$. Setting $\epsilon = |x-\alpha|$, $L' = -\ln\epsilon$, and $L = \ln L'$, we use $t(x) = \ln f(x) = -(L-1)L'$. Computing $t'(x) = \frac{\ln L'}{\epsilon}$ yields the Newton iteration error ratio
\[ \frac{\epsilon_{k+1}}{\epsilon_k} \approx \left| 1 - \frac{1}{\epsilon t'(x)} \right| \approx 1 - \frac{1}{\ln L'}. \]

Associating index $k$ via $k \sim L' / \ln L'$ gives $\epsilon_{k+1} \sim \epsilon_k (1 - 1/\ln k)$, which leads to $\ln \epsilon_{k+1} - \ln \epsilon_k \sim -1/\ln k$. The sum $\sum_{j=2}^{k-1} (1/\ln j)$ is asymptotically equivalent to the logarithmic integral $\text{li}(k) \sim k/\ln k$, yielding
\[ \epsilon_k \sim \epsilon_0 \exp\!\Bigl(-\frac{k}{\ln k}\Bigr) = \Theta\Bigl( C^{k/\ln k} \Bigr), \]
where $C=e^{-1} \in (0,1)$, establishing UP-anti-linearithmic convergence with $\psi(k)=k/\ln k$.
\end{exam}

\Cref{fig:lithx} provides numerical confirmation. Newton's method was applied to the functions from Examples~\ref{ex:linearithmic} and \ref{ex:anti-linearithmic} with $\alpha=0$. The observed convergence rates qualitatively match the theory: linearithmic convergence is slightly faster than linear but markedly slower than quadratic, while anti-linearithmic convergence is only slightly slower than linear.

\subsection{Gradient Descent for Optimization}
\label{subsec:gd_examples}

The QUP-order framework is effective for analyzing not only iterative solvers of nonlinear equations but also optimization methods. While adapting Newton's method examples from \Cref{subsec:newton_examples} to optimization is straightforward, gradient descent (GD), $\boldsymbol{x}_{k+1} = \boldsymbol{x}_k - \eta_k \boldsymbol{\nabla} F(\boldsymbol{x}_k)$, requires additional derivation. Classical GD analysis typically assumes Lipschitz-continuous gradients~\cite{boyd2004convex,nesterov2004introductory,nocedal2006numerical}, yielding linear or sublinear rates that typically correspond to QUP-linear and QUP-logarithmic, respectively. Recent work has explored weaker conditions~\cite{berger2020quality}, for which the convergence rates also correspond to QUP-logarithmic.

We demonstrate that GD can exhibit QUP-fractional-power convergence rates, which has not been previously reported. To this end, we design radially symmetric objective functions $F(\boldsymbol{x}) = \Phi(\|\boldsymbol{x}\|)$, with particular gradient behaviors near the minimum $\boldsymbol{x}_* = \boldsymbol{0}$, where $r = \|\boldsymbol{x}\|$. The gradient is $\boldsymbol{\nabla} F(\boldsymbol{x}) = \Phi'(r) \frac{\boldsymbol{x}}{r}$. Let $f(r) = \Phi'(r)$, and then $\boldsymbol{\nabla} F(\boldsymbol{x}) = f(r) \frac{\boldsymbol{x}}{r}$. The GD iteration with fixed step size $\eta$ becomes
\[
\boldsymbol{x}_{k+1} = \boldsymbol{x}_k - \eta f(r_k) \frac{\boldsymbol{x}_k}{r_k} = \boldsymbol{x}_k \left( 1 - \eta \frac{f(r_k)}{r_k} \right).
\]
Taking the Euclidean norm ($r_k = \|\boldsymbol{x}_k\|$), we obtain the scalar recurrence for the norm
\[
r_{k+1} = \|\boldsymbol{x}_{k+1}\| = r_k \left| 1 - \eta \frac{f(r_k)}{r_k} \right|.
\]
This scalar recurrence can be analyzed using the techniques from previous sections by choosing $f(r)$ appropriately.

\begin{exam}[Fractional-Power Convergence of GD]
\label{ex:gd_designed_fractional}
For $0 < r < 1$, define $f(\rho) = \rho(-\ln \rho)^{1-1/r}$ for $\rho \in (0, e^{-1})$. We construct the objective function $F(\boldsymbol{x}) = \int_0^{\|\boldsymbol{x}\|} f(\rho) d\rho$, expressible using the upper incomplete gamma function \cite[Sec.~8.2]{olver2010nist} as
\begin{equation}\label{eq:gd_frac_potential_F}
F(\boldsymbol{x}) = \left(\frac{1}{2}\right)^{2-1/r} \Gamma\left(2-\frac{1}{r}, -2\ln\|\boldsymbol{x}\|\right).
\end{equation}
With gradient $\boldsymbol{\nabla} F(\boldsymbol{x}) = f(r) \boldsymbol{x}/r$ (where $r = \|\boldsymbol{x}\|$), gradient descent using step size $\eta=1$ yields $r_{k+1} = r_k |1 - (-\ln r_k)^{1-1/r}|$.

Setting $v_k = -\ln r_k$ and taking logarithms, we obtain the recurrence $v_{k+1} - v_k \sim v_k^{1-1/r}$. With $s = 1/r - 1 > 0$, this yields $v_k \sim (1/r)^r k^r$. The QUP-order limit with $\psi(k)=k^r$ is
\begin{equation}\label{eq:gd_qup_limit}
\lim_{k\to\infty} (r_k)^{1/\psi(k)} = \lim_{k\to\infty} e^{-v_k/k^r} = e^{-(1/r)^r},
\end{equation}
confirming QUP-fractional-power order $r$ with P-base $C_\psi = e^{-(1/r)^r} \in (0,1)$.

\Cref{fig:gd_frac_plots} validates this analysis, showing error decay for $r \in \{0.25, 0.5, 0.75\}$ (\Cref{fig:gd_frac_error}) and the numerically estimated convergence exponents approaching their theoretical values (\Cref{fig:gd_frac_rate}).
\end{exam}

\begin{figure}[htb]
\centering
\subfloat[Error decay $\|\boldsymbol{x}_k\|$ vs. $k$. \label{fig:gd_frac_error}]{%
    \includegraphics[width=0.48\textwidth]{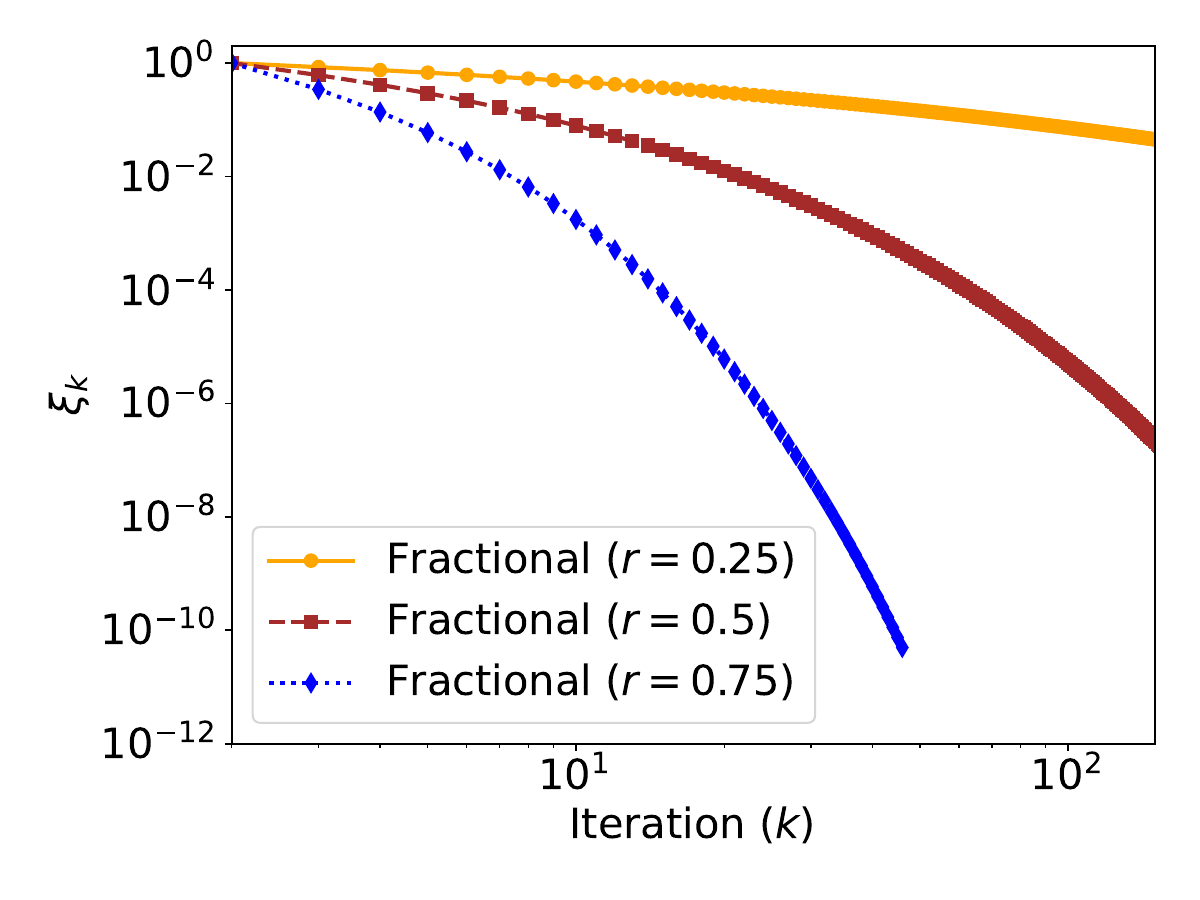}
}
\hfill
\subfloat[Estimated rate $r_k^{\text{est}}$ vs. $k$. \label{fig:gd_frac_rate}]{%
    \includegraphics[width=0.48\textwidth]{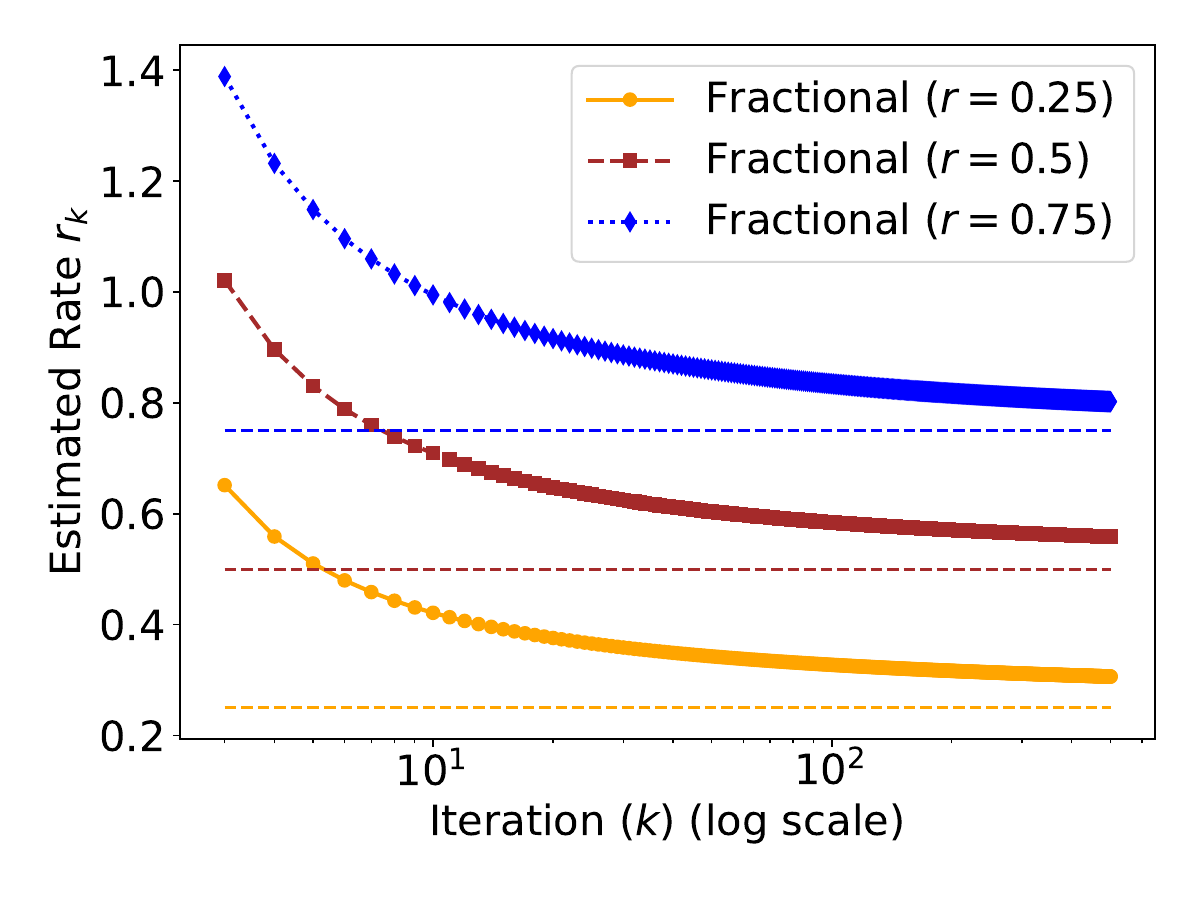}
}
\caption{Numerical illustration of QUP-fractional-power convergence ($r=0.25, 0.5, 0.75$) for gradient descent (\Cref{ex:gd_designed_fractional}).}
\label{fig:gd_frac_plots}
\end{figure}

In addition, we can construct an objective function whose gradient mimics the dynamics of the anti-linearithmic example for Newton's method (\Cref{ex:anti-linearithmic}), leading to UP-anti-linearithmic convergence for gradient descent. Its corresponding potential function $F(\boldsymbol{x}) = \int_{-\ln\|\boldsymbol{x}\|}^{\infty} (e^{-2t}/\ln t) dt$ lacks a representation in terms of standard special functions. We omit the detailed construction and analysis for this case.

\section{\texorpdfstring{$K$-Point Iterative Methods under H\"older Continuity}{K-Point Iterative Methods under H\"older Continuity}}
\label{sec:multipoint}

Previous sections demonstrated the utility of the P-order framework for analyzing methods like Newton's and gradient descent, particularly revealing sublinear behaviors. We now extend this analysis to a class of interpolation-based \emph{multipoint iterative methods}\footnote{These methods are also referred to as \emph{single-point methods with memory} in the classification by Traub~\cite{traub1964iterative}. Our terminology follows~\cite{tornheim1964convergence}, which predated~\cite{traub1964iterative}.} for solving nonlinear scalar equations $f(x)=0$. The secant method ($K=2$) and (inverse) Muller's method ($K=3$)~\cite{muller1956method} are well-known examples. While these specific low-order methods are foundational, modern high-performance root-finding algorithms often employ multipoint schemes with higher values of $K$ (e.g., $K=4$) based on higher-order interpolation. For instance, robust solvers like Brent's method~\cite{brent1973algorithms,alefeld1995algorithm}, often combine safe bracketing techniques with fast multipoint methods.

Classical convergence analysis for $K$-point iterative methods assumes $f \in C^K$ near the root $x_*$. For methods based on polynomial interpolation, such as inverse interpolation where $x_{k+1} = P_k(0)$ (using $k$ for iteration index) with $P_k(f(x_{k-i})) = x_{k-i}$ for $i=0, \dots, K-1$, this leads to order $q_K(1)$, the positive root of $q^K - q^{K-1} - \dots - q - 1 = 0$~\cite{tornheim1964convergence}, for which Tornheim assumed $C^{K}$ continuity and used the definition of ``order'' due to Wall~\cite{wall1956order}. We now derive a new result when this smoothness condition is relaxed to $f \in C^{K-1,\nu}$, meaning $f^{(K-1)}$ is H\"older continuous with exponent $\nu \in (0, 1]$. Since H\"older continuity is typically given as an inequality, we will use the big-$\mathcal{O}$ notation for UP-order, further demonstrating the flexibility of the P-order framework.

\begin{thm}[Convergence Rate of $K$-Point Methods for $C^{K-1,\nu}$ Functions]
\label{thm:Kpoint_holder_rate}
Let $f: I \to \mathbb{R}$ be defined on an open interval $I$. Assume $f$ has a simple root $x_* \in I$ (i.e., $f(x_*)=0, f'(x_*) \neq 0$) and that $f \in C^{K-1}(I)$ for some integer $K \ge 2$. Furthermore, if $f^{(K-1)}$ is H\"older continuous with exponent $\nu \in (0, 1]$ and constant $L_\nu > 0$ in a neighborhood of $x_*$, i.e.,
\begin{equation}\label{eq:holder_K_minus_1_thm}
 |f^{(K-1)}(x) - f^{(K-1)}(y)| \le L_\nu |x-y|^\nu \quad \text{for } x,y \text{ near } x_*.
\end{equation}
Let $\{x_k\}$ (using $k$ as iteration index) be a sequence generated by a $K$-point iterative method based on inverse polynomial interpolation. If the initial points $x_0, \dots, x_{K-1}$ are sufficiently close to $x_*$, then the sequence converges superlinearly to $x_*$ and the method converges with a UP-order of at least $q_K(\nu)$, i.e., $|\epsilon_k| = \mathcal{O}(A^{(q_K(\nu))^k})$ for some $A \in (0,1)$, where $q_K(\nu)$ is the unique positive root of the characteristic equation
\begin{equation} \label{eq:Kpoint_char_eq}
q^K = q^{K-1} + q^{K-2} + \dots + q + \nu.
\end{equation}
\end{thm}

\begin{proof}
Let $\{x_k\}$ be the sequence generated by the $K$-point iterative method based on inverse polynomial interpolation, $x_{k+1} = P_k(0)$, where $P_k$ interpolates $(y_{k-i}, x_{k-i})$ for $i=0,\dots,K-1$, with $y_j=f(x_j)$. Denote the error by $\epsilon_k = x_k - x_*$. For iterates sufficiently close to the simple root $x_*$ (where $f'(x_*) \neq 0$), the error satisfies
\begin{equation} \label{eq:error_relation}
|\epsilon_{k+1}| \le C |f[x_k, \dots, x_{k-K+1}, x_*]| \prod_{i=0}^{K-1} |f(x_{k-i})|
\end{equation}
for some constant $C > 0$. Since $f(x_*)=0$ and $f'(x_*) \neq 0$, we have $|f(x_{k-i})| \le M |\epsilon_{k-i}|$ for some $M>0$.

Since $f^{(K-1)}$ is Hölder continuous with exponent $\nu$, its modulus of continuity satisfies $\omega_1(f^{(K-1)}, t) \le L_\nu t^\nu$. This implies $\omega_K(f, t) \le C t^{K-1} \omega_1(f^{(K-1)}, t) \le C' t^{K-1+\nu}$. For divided differences, we have $|f[x_0, \dots, x_K]| \le \tilde{C} \omega_K(f, h) / h^K$ where $h = \operatorname{diam}\{x_0, \dots, x_K\}$. Combining these yields
\begin{equation} \label{eq:divdiff_bound_derived}
|f[x_k, \dots, x_{k-K+1}, x_*]| \le C_1 (\operatorname{diam}(I_k))^{\nu-1}
\end{equation}
where $I_k = \{x_k, \dots, x_{k-K+1}, x_*\}$.

We assume superlinear convergence (i.e., $|\epsilon_j| = o(|\epsilon_{j-1}|)$ as $j \to \infty$) and will verify this assumption retrospectively after establishing the convergence rate. With superlinear convergence, $|\epsilon_{k-K+1}|$ dominates in $I_k$, giving $\operatorname{diam}(I_k) = \Theta(|\epsilon_{k-K+1}|)$. Thus, $|f[x_k, \dots, x_{k-K+1}, x_*]| \le C_2 |\epsilon_{k-K+1}|^{\nu-1}$. Substituting into \eqref{eq:error_relation} yields
\begin{equation}
 |\epsilon_{k+1}|  \le C_{\text{new}} |\epsilon_k| |\epsilon_{k-1}| \dots |\epsilon_{k-K+1}|^{\nu} \label{eq:epsilon_recurrence_final}
\end{equation}
where $C_{\text{new}} = C C_2 M^K$.

To analyze this recurrence, consider the auxiliary sequence $\{z_k\}$ defined by $z_{k+1} = C_{\text{new}} z_k z_{k-1} \dots z_{k-K+1}^{\nu}$. This sequence converges as $z_k = \Theta(A^{(q_K(\nu))^k})$ where $q_K(\nu)$ is the positive root of \eqref{eq:Kpoint_char_eq} and $A \in (0,1)$. Since $q_K(\nu) > 1$, this confirms superlinear convergence. By choosing initial values $z_i \ge |\epsilon_i|$ and using induction with \eqref{eq:epsilon_recurrence_final}, we establish $|\epsilon_k| \le z_k$ for all $k$, thus
\[ |\epsilon_k| = \mathcal{O}\bigl(A^{(q_K(\nu))^k}\bigr). \]

This confirms our superlinear convergence assumption (justifying the diameter approximation) and establishes that the method has UP-order at least $q_K(\nu)$.
\end{proof}

\begin{rem}[Necessity of UQ-Order]
In the proof, the notion of moduli of continuity allows us to generalize this proof to derive characteristic equations for even more general classes of continuities, such as log-modulated H\"older continuity. In addition, we intentionally used a standard technique involving an auxiliary sequence $z_k$ to bound the error. This technique is related to proving R-order by constructing a bounding Q-order sequence~\cite{nocedal2006numerical,ortega1970iterative}. However, applying  Q-order analysis directly would prove fruitless here, because rigorous Q-order proofs for the secant method typically require $C^2$ continuity and do not apply for $C^{1,1}$, let alone $C^{1,\nu}$ for arbitrary $\nu\in(0,1]$. The analysis in \cite{hernandez2001secant} for the secant method ($K=2$) under H\"older-continuous derivatives required constructing specialized recursive sequences, a technique difficult to generalize to $K>2$. The P-order framework enabled a systematic analysis for all $K$ and $\nu$.
\end{rem}

We note that the derived rate $q_K(\nu)$ is expected to be sharp if the bound \eqref{eq:divdiff_bound_derived} is asymptotically tight (which occurs if the H\"older exponent $\nu$ is sharp near $x_*$), as demonstrated numerically below.

\begin{exam}[Secant Method ($K=2$)]
\label{ex:secant_holder}
For the secant method ($K=2$), the characteristic equation \eqref{eq:Kpoint_char_eq} simplifies to $q^2 = q + \nu$ with positive root $q_2(\nu) = \frac{1 + \sqrt{1+4\nu}}{2}$. By \Cref{thm:Kpoint_holder_rate}, if $f \in C^{1,\nu}$, the method has UP-order at least $q_2(\nu)$.
\begin{itemize}
 \item When $\nu=1$: $q_2(1) = \frac{1+\sqrt{5}}{2} \approx 1.618$ (classical results show this under $C^2$).
 \item As $\nu \to 0^+$: $q_2(\nu) \to 1$, degrading toward linear convergence.
 \item For $\nu=0.5$ ($f^{(1)}$ is $1/2$-Hölder): $q_2(0.5) = \frac{1+\sqrt{3}}{2} \approx 1.366$.
\end{itemize}
This matches the weaker R-order in~\cite{hernandez2001secant}, which assumed Hölder-continuous divided differences, but our result establishes a stronger UP-order bound based on Hölder-continuous derivatives.
\end{exam}

\begin{exam}[Muller's Method ($K=3$)]
\label{ex:muller_holder}
Muller's method (in its inverse interpolation form) uses quadratic inverse interpolation ($K=3$). The characteristic equation \eqref{eq:Kpoint_char_eq} is $q^3 = q^2 + q + \nu$. If $f \in C^{2,\nu}$, Muller's method has UP-order at least $q_3(\nu)$, where $q_3(\nu)$ is the positive root.
\begin{itemize}
 \item For $\nu=1$: $q_3(1) \approx 1.839$ (classical analysis shows this under $C^3$).
 \item For $\nu \to 0^+$: As the equation becomes $q(q^2-q-1)=0$ and $q_3(\nu)>1$ for $\nu>0$, we have $\lim_{\nu\to 0^+} q_3(\nu) = \phi \approx 1.618$.
 \item For intermediate $\nu \in (0, 1)$: $q_3(\nu)$ lies between $\phi$ and $q_3(1)$, e.g., $q_3(0.5) \approx 1.656$.
\end{itemize}
This continuous dependence of the order on $\nu$ (ranging from 1.618 to 1.839) appears to be previously unreported for $\nu < 1$.
\end{exam}

This analysis demonstrates the power of combining the UP-order framework with divided difference properties to obtain nuanced convergence rates under weaker smoothness conditions than typically assumed. \Cref{fig:multipoint} presents numerical results validating these findings. The secant method ($K=2$) was applied to $f(x) = x|x|^{\nu}/(1+\nu) + x$, which is $C^{1,\nu}$ near $x_*=0$, and the inverse Muller method ($K=3$) to $f(x) = x|x|^{1+\nu}/(2+\nu) + x$, which is $C^{2,\nu}$ near $x_*=0$, both starting from $x_0=1.0, x_1=0.8$. \Cref{fig:kpoints_normalized_error} shows the error decay $|x_k|$ for $\nu \in \{0.25, 0.5, 1.0\}$, confirming faster convergence for larger $\nu$ and for $K=3$ versus $K=2$. \Cref{fig:kpoints_convergence_rate} plots the estimated convergence order $q_k \approx \ln(|x_{k+1}|/|x_k|) / \ln(|x_k|/|x_{k-1}|)$ against the theoretical orders $q_K(\nu)$ (horizontal lines) from \eqref{eq:Kpoint_char_eq}. As observed in the figure, the estimated rates exhibit oscillations for both methods, particularly during early iterations, but converge to the sharp theoretical rates, supporting \Cref{thm:Kpoint_holder_rate}.

\begin{figure}[htb]
\centering
\begin{subfigure}[b]{0.48\textwidth}
  \centering
  \includegraphics[width=\textwidth]{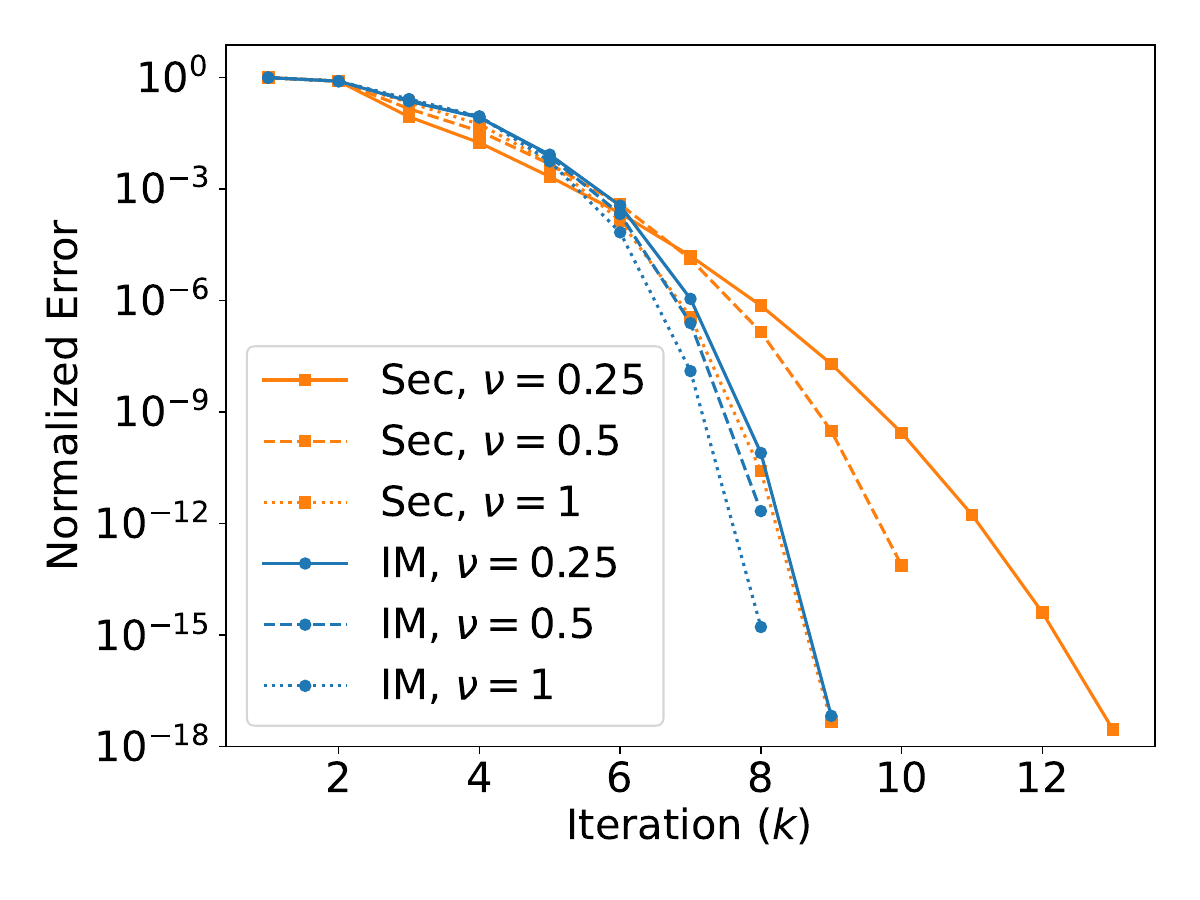}
  \caption{Error decay $|x_k|$ vs. $k$.}
  \label{fig:kpoints_normalized_error}
\end{subfigure}
\hfill 
\begin{subfigure}[b]{0.48\textwidth}
  \centering
  \includegraphics[width=\textwidth]{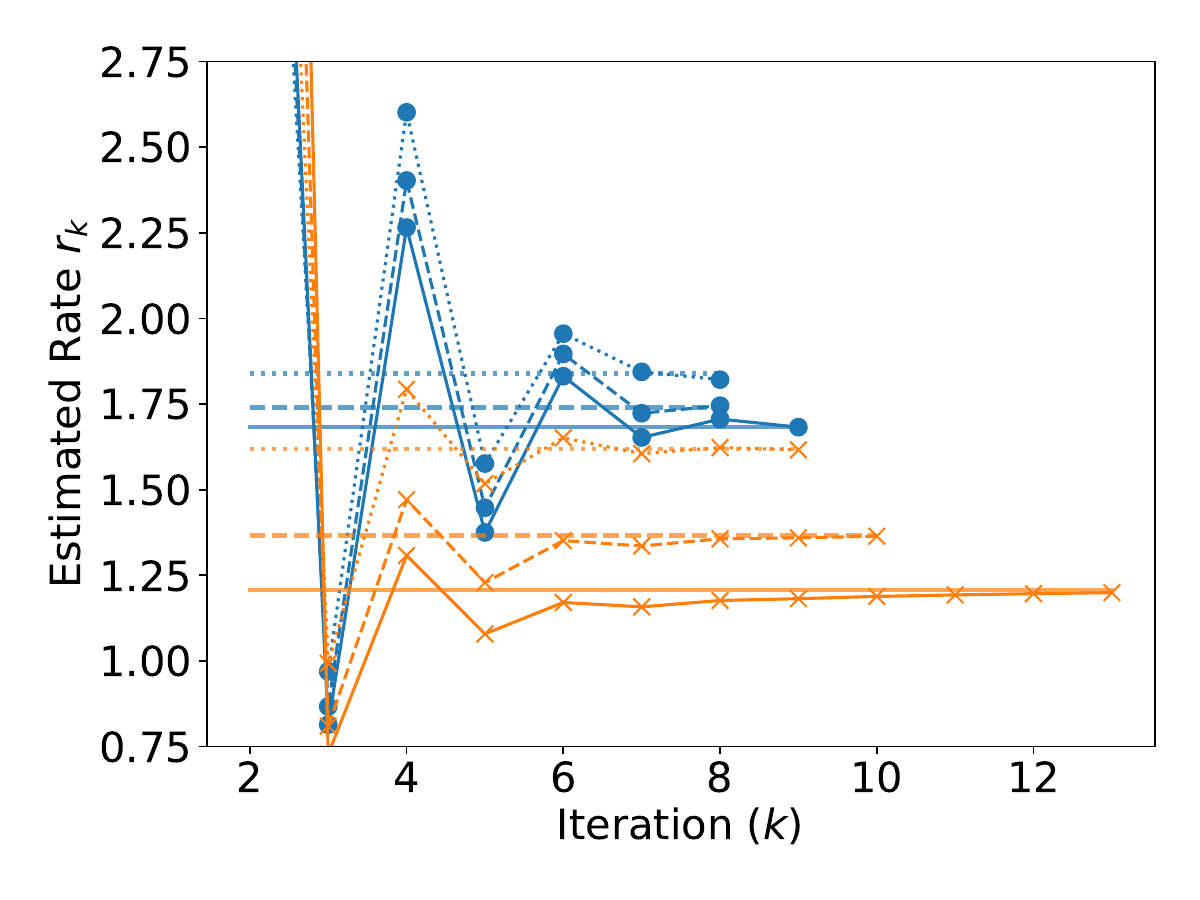}
  \caption{Estimated rate $r_k^{\text{est}}$ vs. $k$.}
  \label{fig:kpoints_convergence_rate}
\end{subfigure}
\caption{Numerical illustration of varying convergence rates of $K$-point iterative method for secant ($K=2$) and inverse Muller's ($K=3$) methods with different H\"older exponent $\nu$.}
\label{fig:multipoint}
\end{figure}

\section{Conclusions and Discussions}
\label{sec:conclusions}

This work introduced P-order, a unified convergence analysis framework designed to overcome critical limitations of classical Q-order (restrictive smoothness and norm dependence) and R-order (imprecision and ambiguity). P-order provides a necessary advancement by employing a power function $\psi(k)$ and asymptotic notation within a norm-independent structure, achieving the essential combination of rigor and granularity. This allows precise characterization of convergence rates across a wide spectrum---including logarithmic, fractional-power, linearithmic, and anti-linearithmic behaviors---while systematically handling weaker continuity conditions prevalent in modern applications by naturally aligning with appropriate Taylor remainder forms. The QUP-order and UP-order subclasses further enhance this flexibility, allowing analysis tailored to specific smoothness contexts.

The power and utility of P-order were demonstrated through novel results, not only extending convergence theory but also providing new insights. We presented a refined fixed-point analysis establishing convergence under weakened assumptions (mere differentiability via \Cref{thm:linear-superlinear}) and derived the first explicit conditions for fractional-power convergence near singularities (\Cref{thm:fp-fractional-power}). Leveraging P-order's precision, we provided the first rigorous characterization of previously unreported convergence rates for Newton's method beyond standard analytic settings (\Cref{subsec:newton_examples}) and established new fractional-power rates for gradient descent (\Cref{subsec:gd_examples}). Furthermore, the framework yielded a novel, unified analysis for $K$-point methods under relaxed $C^{K-1,\nu}$ H\"older continuity (\Cref{sec:multipoint}), culminating in the explicit rate $q_K(\nu)$ (\Cref{thm:Kpoint_holder_rate}) that precisely quantifies the impact of the H\"older exponent on convergence.

Beyond theoretical rigor, P-order offers essential tools and practical insights for algorithm design and analysis, particularly as iterative methods face complex problems in optimization and machine learning where classical assumptions fail. This refined understanding of convergence under diverse continuity conditions is crucial for guiding the development of next-generation algorithms. A key future direction is applying the insights from P-order---understanding the exact interplay between smoothness, rate functions ($\psi(k)$), and constants ($C_\psi$)---to guide the design of new or improved algorithms tailored for challenging large-scale learning scenarios.
\section*{Acknowledgments}
This work used generative AI (including Gemini 2.0 Pro and ChatGPT o3-mini) for brainstorming and improving presentation quality. All the new definitions, theorems, nontrivial proofs, and examples are the original work of the authors.

\bibliographystyle{siamplain}
\bibliography{references}

\end{document}